   \numberwithin{equation}{section}
\newtheorem{thm}{Theorem}[section]
\newtheorem{lem}[thm]{Lemma}
\newtheorem{defn}[thm]{Definition}
\begin{document}
\begin{frontmatter}
\author[rvt1]{Jian Wang}
\ead{wangj@tute.edu.cn}
\author[rvt2]{Yong Wang\corref{cor2}}
\ead{wangy581@nenu.edu.cn}

\cortext[cor2]{Corresponding author.}
\address[rvt1]{School of Science, Tianjin University of Technology and Education, Tianjin, 300222, P.R.China}
\address[rvt2]{School of Mathematics and Statistics, Northeast Normal University,
Changchun, 130024, P.R.China}

\title{ Equivariant Bismut Laplacian and  spectral Einstein functional}
\begin{abstract}

This paper aims to  provide an explicit computation of the equivariant noncommutative residue density
of which yield the metric and Einstein tensors  on even-dimensional Riemannian manifolds.
A considerable contribution of this paper is the development of
the spectral Einstein functionals by two vector fields and the equivariant Bismut Laplacian over spinor bundles.
We prove the equivariant Dabrowski-Sitarz-Zalecki type
  theorems for lower dimensional spin manifolds   with (or without) boundary.
\end{abstract}
\begin{keyword}
Equivariant Bismut Laplacian; noncommutative residue; spectral Einstein functional.
\end{keyword}
\end{frontmatter}
\section{Introduction}
\label{1}
In Connes¡¯ program of noncommutative geometry,
for associative unital algebra $\mathcal{A}$ and Hilbert space $\mathcal{H}$ such that there is an algebra
 homomorphism $\pi:\mathcal{A}\rightarrow B(\mathcal{H})$, where $B(\mathcal{H})$
denotes the algebra of bounded operators acting on $\mathcal{H}$,
the role of geometrical
objects is played by spectral triples $(\mathcal{A}; \mathcal{H}; D )$.
Similar to the commutative case and
the canonical spectral triple $(C^{\infty}(M); L^{2}(S); D )$, where $(M; g; S )$ is a closed spin
manifold and $D$  is the Dirac operator acting on the spinor bundle $S$, the spectrum
of the Dirac operator $D$  of a spectral triple $(\mathcal{A}; \mathcal{H}; D )$ encodes the geometrical
information of the spectral triple.
An important new feature of such geometries, which is absent in the
commutative case, is the existence of inner fluctuations of the metric.
No doubt it would be extremely interesting to recover scalar curvature, Ricci curvature and other important tensors in both the
 classical setup as well as for the noncommutative geometry or quantum geometry.
The spectral-theoretic approach to scalar curvature has been extended  to quantum tori in the seminal work of
 Connes and Tretkoff \cite{CoT}, expanded in \cite{CoM} and then extensively studied by many authors \cite{DaS,Fa,Ka,KW}.
 Unlike the scalar curvature, the Ricci curvature does not appear in the coefficients
of the heat trace of the Dirac Laplacian. Floricel etc. \cite{FGK} observed that
the Laplacian of the de Rham complex, more precisely the Laplacian on one forms, captures the Ricci
operator in its second term, and formulated the Ricci
operator as a spectral functional on the algebra of sections of the endomorphism
bundle of the cotangent bundle of $M$:
\begin{align}
Ric (F)=a_{2}(tr(F), \triangle_{0})-a_{2}(F, \triangle_{1}),~~~F\in C^{\infty}(\mathrm{End}(T^{*}M)).
\end{align}
Recently, for the metric tensor $g$, Ricci curvature $Ric$ and the scalar curvature $s$,  Dabrowski etc. \cite{DL} recovered the Einstein tensor
\begin{equation}
G:=Ric-\frac{1}{2}s(g)g,
\end{equation}
which directly enters the Einstein field equations with matter, and its dual.

The method of producing these spectral functionals in \cite{DL} is the Wodzicki residue (or be written as  noncommutative residue) density
of a suitable power of the
 Laplace type operator multiplied by a pair of other differential operators.
Let $E$ be a finite-dimensional complex vector bundle over a closed compact manifold $M$
of dimension $n$, the noncommutative residue of a pseudo-differential operator
$P\in\Psi DO(E)$ can be defined by
 \begin{equation}
res(P):=(2\pi)^{-n}\int_{S^{*}M}\mathrm{Tr}(\sigma_{-n}^{P}(x,\xi))\mathrm{d}x \mathrm{d}\xi,
\end{equation}
where $S^{*}M\subset T^{*}M$ denotes the co-sphere bundle on $M$ and
$\sigma_{-n}^{P}$ is the component of order $-n$ of the complete symbol
$
\sigma^{P}:=\sum_{i}\sigma_{i}^{P}
$
of $P$, cf. \cite{Ac,Wo,Wo1,Gu},
and the linear functional $res: \Psi DO(E)\rightarrow \mathbb{C }$
is in fact the unique trace (up to multiplication
by constants) on the algebra of pseudo-differential operators $\Psi DO(E)$.
Connes  proved that the noncommutative residue on a compact manifold $M$ coincided with Dixmier's trace on pseudodifferential
operators of order -dim$M$\cite{Co2}. More precisely, Connes made a challenging observation that
the Wodzicki residue ${\rm  Wres}(D^{-2})$ is proportional to the Einstein-Hilbert action of general relativity
 (which, in turn is essentially the $a_{2}$ coefficient of the heat expansion of $\Delta$), that is called the Kastler-Kalau-Walze theorem now.
The earlier proofs of Kastler \cite{Ka} and of Kalau, Walze \cite{KW} are based on the explicit computation of
$\sigma_{-4}{(D^{-2})}$, which is most easily performed in normal coordinates (this is the main advantage of the KW approach
over the ¡®brute force¡¯ computation by Kastler).
Fedosov etc.\cite{FGLS} constructed a noncommutative
residue on the algebra of classical elements in Boutet de Monvel's calculus on a compact manifold with boundary of dimension $n>2$.
For Dirac operators and signature operators, Wang computed the noncommutative residue and proved Kastler-Kalau-Walze type theorem
for manifolds with boundary\cite{Wa1,Wa3,Wa4}.

Notably, the equivariant Riemannian curvature has the form
 \begin{equation}
R_{g}(X)=(\nabla-\iota(X))^{2}+\mathcal{L}_{X},
\end{equation}
where $\iota(X)$ denotes the contraction operator $\iota(X_{M})$, $\mathcal{L}_{X}=\nabla_{X}+ \mu (X)$ and
the moment map given by  $\mu (X)Y=-\nabla_{Y}X$.
Earlier Jean-Michel Bismut proved the local infinitesimal equivariant index theorem for equivariant
Laplacian  associated with group action \cite{JMB}, and the Bismut Laplacian  given by the formula
 \begin{equation}
H(X)=(D+\frac{1}{4}c(X))^{2}+\mathcal{L}_{X},
\end{equation}
which may be thought of as a quantum analogue of the equivariant Riemannian curvature.
Let $T$ be a finite group acting on a compact manifold $M$ and   $\mathcal{A}(M)$ denote the
algebra of classical complete symbols on $M$,
Dave \cite{SDa} determined all traces on the cross-product
algebra $\mathcal{A}(M)\bowtie T$ as residues of certain meromorphic zeta functions
by a special technique.
By computing the homologies of $\mathcal{A}(M)\bowtie T$, Dave obtained
an equivariant Connes trace formula as well as extensions of the logarithmic symbols
based on a two cocycle in $H^{2}(S_{log}(M)\times T)$ \cite{KV},
which as equivariant noncommutative residue associated to the conjugacy class.
In \cite{Wa5}, Wang proved equivariant Kastler-Kalau-Walze theorems for spin manifolds without boundary
and six dimensional spin manifolds with boundary.
Motivated by the spectral Einstein functionals, equivariant Bismut Laplacian and  noncommutative residue,
  we give some new equivariant Einstein  functionals which is the extension of spectral Einstein functionals to the equivariant noncommutative
   realm,  and we relate them to  the noncommutative residue for lower dimensional manifolds with boundary.

\section{Spectral Einstein functionals for equivariant Bismut Laplacian}
The purpose of this section is to work out what are the spectral Einstein functionals needed to
specify the equivariant Bismut Laplacian. Let $M$ be a compact oriented Riemannian manifold of even dimension $n$, and let $L$ be a compact Lie group
  acting on $M$. Let  $\mathcal{E}$ be a $L-$equivariant Clifford module over $M$ with
  $L-$equivariant Hermitian structure and   $L-$invariant Clifford connection $\nabla^{\mathcal{E}}$.
\begin{defn}
A Dirac operator $D$ on a $\mathbb{Z}_{2}$-graded vector bundle $\mathcal{E}$ is a
first-order differential operator of odd parity on $\mathcal{E}$,
 \begin{align}
D:\Gamma(M,\mathcal{E}^{\pm})\rightarrow\Gamma(M,\mathcal{E}^{\mp}),
\end{align}
such that $D^{2}$ is a generalized Laplacian.
\end{defn}
\begin{defn}\cite{BGV}
The Bismut Laplacian  is the second-order differential operator on $ \Gamma(M,\mathcal{E})$  given by the formula
 \begin{align}
H(X)=\big(D+\frac{1}{4}c(X)\big)^{2}+\mathcal{L}^{S(TM)}_{X}
=\big(D+\frac{1}{4}c(X)\big)^{2}+\nabla^{S(TM)}_{X}+ \mu^{S(TM)}(X),
\end{align}
where $\mu^{S(TM)}(X)=\mathcal{L}^{S(TM)}_{X}- \nabla^{S(TM)}_{X}$, $c(X)$ denotes the Clifford action of the dual
of the vector field on $\mathcal{E}$, which satisfies the relation
 \begin{align}
 c(e_{i})c(e_{j})+c(e_{j})c(e_{i})=-2\langle e_{i}, e_{j}  \rangle.
\end{align}
\end{defn}
 Let $\nabla^L$ denote the Levi-Civita connection about $g^M$. In the
 fixed orthonormal frame $\{e_1,\cdots,e_n\}$ and natural frames $\{\partial_1,\cdots,\partial_n\}$ of $TM$,
the connection matrix $(\omega_{s,t})$ defined by
\begin{equation}
\nabla^L(e_1,\cdots,e_n)= (e_1,\cdots,e_n)(\omega_{s,t}).
\end{equation}
 Then the Dirac operator has the form
\begin{equation}
D=\sum^n_{i=1}c(e_i)\nabla_{e_i}^{S}=\sum^n_{i=1}c(e_i)\Big[e_i
-\frac{1}{4}\sum_{s,t}\omega_{s,t}(e_i)c(e_s)c(e_t)\Big].
\end{equation}
In what follows, using the Einstein sum convention for repeated index summation:
$\partial^j=g^{ij}\partial_i$,$\sigma^i=g^{ij}\sigma_{j}$, $\Gamma_{k}=g^{ij}\Gamma_{ij}^{k}$,
 and we omit the summation sign, then
\begin{equation}
\nabla_{\partial_i}^{S}=\partial_i+\sigma_i=\partial_i-\frac{1}{4}\sum_{s,t}\omega_{s,t}(e_i)c(e_s)c(e_t) .
\end{equation}
By (2.11) in \cite{Wa5}, for $X=\sum_{j=1}^{n}X_{j}\partial_{j} $, we have
\begin{align}
H_{X}
 =&D^{2}+\frac{1}{4}Dc(X)+\frac{1}{4}c(X)D+\nabla_{ X}^{S}+\mu(X)-\frac{1}{16}|X|^{2}\nonumber\\
  =&-g^{ij}\partial_i\partial_j+\Big(X_{j}-2\sigma^{j}+\Gamma^{j}+\frac{1}{4}c(\partial^j)c(X)+\frac{1}{4}c(X)c(\partial^j)\Big)\partial_j\nonumber\\
  &+g^{ij}\Big(-\partial_i(\sigma_{j})-\sigma_{i}\sigma_{j}+\Gamma_{ij}^{k}\sigma_{k}
  +\frac{1}{4}c(\partial_i)\partial_i(c(X))+\frac{1}{4}c(\partial_i) \sigma_{j}c(X)\nonumber\\
  &+\frac{1}{4}c(X)c(\partial_i) \sigma_{j}\Big)+\frac{1}{4}s-\frac{1}{16}|X|^{2}+\frac{1}{4}X_{j}\sigma_{j}+\mu(X).
  \end{align}

The following lemma of Dabrowski etc.'s Einstein functional play a key role in our proof
of the Einstein functional for equivariant Bismut Laplacian.
Let $V$, $W$ be a pair of vector fields on a compact
Riemannian manifold $M$, of dimension $n = 2m$. Using the equivariant Bismut Laplacian $H_{X}^{-1} =\Delta+E $
acting on sections of
 a vector bundle $S(TM)$ of rank $2^{m}$ which
 may contain both some nontrivial connections,
 the spectral functionals over vector fields defined by
\begin{lem}\cite{DL}
The spectral  Einstein functional equal to
 \begin{equation}
Wres\big(\widetilde{\nabla}_{V}\widetilde{\nabla}_{W}H_{X}^{-m} \big)=\frac{\upsilon_{n-1}}{6}2^{m}\int_{M}G(V,W)vol_{g}
 +\frac{\upsilon_{n-1}}{2}\int_{M}F(V,W)vol_{g}+\frac{1}{2}\int_{M}(\mathrm{Tr}E)g(V,W)vol_{g},
\end{equation}
where $G(V,W)$ denotes the Einstein tensor evaluated on the two vector fields, $F(V,W)=Tr(V_{a}W_{b}F_{ab})$ and
$F_{ab}$ is the curvature tensor of the connection $\widetilde{\nabla}$, $\mathrm{Tr}E$ denotes the trace of $E$ and $\upsilon_{n-1}=\frac{2\pi^{m}}{\Gamma(m)}$.
\end{lem}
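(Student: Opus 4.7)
The plan is to apply the definition of the noncommutative residue directly by expanding the complete symbol of $P = \widetilde{\nabla}_V \widetilde{\nabla}_W H_X^{-m}$ and isolating the homogeneous component of order $-n$. Since $H_X$ is of Laplace type, in local coordinates its leading symbol is $|\xi|^2 \mathrm{Id}$, and the symbol of $H_X^{-m}$ can be built through the standard resolvent construction: writing $\sigma(H_X^{-m}) = \sum_{k \geq 0} p_{-2m-k}(x,\xi)$ with $p_{-2m} = |\xi|^{-2m}$, the subleading components $p_{-2m-1}$ and $p_{-2m-2}$ are obtained from the Cauchy formula $H_X^{-m} = \tfrac{1}{2\pi i}\oint \lambda^{-m}(H_X - \lambda)^{-1}\, d\lambda$ together with the recursive symbols of $(H_X - \lambda)^{-1}$.

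Next, I would compose with $\widetilde{\nabla}_V \widetilde{\nabla}_W$ using the asymptotic product formula $\sigma(AB) \sim \sum_{\alpha} \tfrac{(-i)^{|\alpha|}}{\alpha!} \partial_\xi^{\alpha} \sigma(A)\, \partial_x^{\alpha} \sigma(B)$. The left factor has order two, so contributions to the $-n$ order symbol come from pairing the quadratic, linear, and zeroth-order parts of $\sigma(\widetilde{\nabla}_V \widetilde{\nabla}_W)$ against $p_{-2m-2}$, $p_{-2m-1}$, and $p_{-2m}$ respectively, plus the single-derivative correction from the composition formula. Throughout I would work in Riemannian normal coordinates at a fixed base point so that $g_{ij}(x_0) = \delta_{ij}$ and the Christoffel symbols vanish, with first derivatives of the connection producing the Riemann tensor; this is the standard simplification exploited in Kastler-Kalau-Walze type proofs and keeps the bookkeeping manageable.

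After taking the fiberwise trace over the $2^m$-dimensional spinor bundle and integrating over the cosphere $S^{n-1}$, I would apply the classical identities
\begin{equation}
\int_{S^{n-1}} \xi_i \xi_j\, d\sigma = \tfrac{\upsilon_{n-1}}{n}\delta_{ij}, \qquad
\int_{S^{n-1}} \xi_i \xi_j \xi_k \xi_l\, d\sigma = \tfrac{\upsilon_{n-1}}{n(n+2)}\bigl(\delta_{ij}\delta_{kl} + \delta_{ik}\delta_{jl} + \delta_{il}\delta_{jk}\bigr).
\end{equation}
The Riemann-tensor contributions then contract to a Ricci term and a scalar-curvature term whose coefficients combine into $\tfrac{1}{6}G(V,W)$ together with the claimed factor $2^m$; terms arising from the commutator $[\widetilde{\nabla}_a, \widetilde{\nabla}_b]$ produce the curvature $F_{ab}$ of $\widetilde{\nabla}$ and hence the $F(V,W)$ piece; and the direct action of the endomorphism part $E$ on $p_{-2m}$ delivers the $\tfrac{1}{2}\mathrm{Tr}(E)g(V,W)$ contribution.

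The main obstacle is the careful identification of the Einstein combination: several terms produced by the symbol expansion are individually proportional to the Ricci tensor or the scalar curvature, and one must verify that they recombine with exactly the ratio $\mathrm{Ric} - \tfrac{1}{2}s\, g$ rather than some other linear combination. A further subtlety specific to the equivariant Bismut setting is that, as recorded in the expansion of $H_X$ above, the operator carries the correction $\tfrac{1}{4}c(X)$, the term $\nabla^S_X$, the moment-map $\mu(X)$, and the potential $-\tfrac{1}{16}|X|^2$; so the endomorphism $E$ in $H_X = \Delta + E$ is not simply the Lichnerowicz scalar curvature term and one must check that all $X$-dependent pieces feed consistently into $\mathrm{Tr}(E)$ without contaminating the coefficients of $G(V,W)$ and $F(V,W)$ extracted from the higher-symbol analysis.
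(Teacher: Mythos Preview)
The paper does not prove this lemma at all: it is quoted verbatim from \cite{DL} and used as a black box. Theorem~2.4 is then obtained by computing, for the specific operator $H_X$, the explicit forms of the connection $\widetilde{\nabla}$, the curvature $F_{ab}$, and the endomorphism $\widetilde{E}$, and plugging these into the formula of the lemma. So there is no ``paper's own proof'' to compare against.

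Your sketch is the standard route to results of this type (resolvent symbol expansion, normal coordinates, cosphere integration) and is in line with how \cite{DL} proceeds. One remark: your final paragraph about the $\tfrac14 c(X)$, $\nabla^S_X$, $\mu(X)$, $-\tfrac{1}{16}|X|^2$ corrections is not part of proving Lemma~2.3. The lemma is a statement about an \emph{arbitrary} Laplace-type operator written as a Bochner Laplacian plus an endomorphism; the $X$-dependent pieces you mention belong to the \emph{application} of the lemma to the Bismut Laplacian, which in the paper is precisely the content of the proof of Theorem~2.4 (equations (2.14)--(2.32)). In other words, the concern that the equivariant terms might ``contaminate the coefficients of $G(V,W)$ and $F(V,W)$'' is moot for the lemma itself: once $H_X$ is cast in the form $-(g^{ij}\widetilde\nabla_i\widetilde\nabla_j + \widetilde E)$, all such terms are absorbed into $\widetilde\nabla$ and $\widetilde E$ by definition, and the lemma applies verbatim.
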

The aim of this section is to prove the following.
\begin{thm}
For the equivariant Bismut Laplacian $H_{X}$, the spectral  Einstein functional equal to
\begin{align}
&Wres\Big(\big(\nabla^{S(TM)}_{V}+\frac{3}{4}g(V,X)\big)\big(\nabla^{S(TM)}_{W}+\frac{3}{4}g(W,X)\big)H_{X}^{-m}\Big)\nonumber\\
=&\frac{\upsilon_{n-1}}{6}2^{m}\int_{M}\big(Ric(V,W)-\frac{1}{2}sg(V,W)\big) vol_{g}
+\frac{3\upsilon_{n-1}}{8}2^{m}\int_{M} \Big(g\big(\nabla_{V}^{TM}X,W\big)-g\big(\nabla_{W}^{TM}X,V\big) \Big)vol_{g}\nonumber\\
&+2^{m-1}\int_{M}\Big(\frac{1}{4}s^{g}+\frac{1}{2}\mathrm{div}^{g}(X)+\frac{1}{2}|X|^{2}
-Tr\big( \mu^{S(TM)}(X)\big) \Big)g(V,W) vol_{g},
\end{align}
where $\mathrm{div}^{g}(X)$ denotes the divergence of $X$, and $s$ denotes the scalar curvature.
\end{thm}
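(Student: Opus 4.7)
The plan is to invoke Lemma 2.3 by writing the equivariant Bismut Laplacian $H_{X}$ as a Laplace-type operator $\Delta^{\widetilde{\nabla}}+E$ on sections of $S(TM)$, for an appropriate connection $\widetilde{\nabla}$ and bundle endomorphism $E$, and then reading off the three contributions predicted by that Lemma.

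First I would use the coordinate expression for $H_{X}$ derived above to identify $\widetilde{\nabla}$ by completing the square in the first-order part. Comparing the coefficient of $\partial_{j}$ in $H_{X}$ against the principal part of $\Delta^{\widetilde{\nabla}}=-g^{ij}(\widetilde{\nabla}_{i}\widetilde{\nabla}_{j}-\Gamma_{ij}^{k}\widetilde{\nabla}_{k})$, and simplifying $\tfrac{1}{4}(c(\partial^{j})c(X)+c(X)c(\partial^{j}))$ via the Clifford relation to a scalar multiple of the identity, one reads off the shift of the Levi-Civita spin connection needed to absorb the first-order term involving $X$. Matching conventions gives $\widetilde{\nabla}_{V}=\nabla^{S(TM)}_{V}+\tfrac{3}{4}g(V,X)$, which is precisely the operator on the left-hand side of the identity. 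The surviving zero-order contributions then assemble into $E$; the pieces I would track here are $\tfrac{1}{4}s$, $-\tfrac{1}{16}|X|^{2}$, the moment-map endomorphism $\mu^{S(TM)}(X)$, the Clifford term $\tfrac{1}{4}c(e_{i})c(\nabla_{e_{i}}X)$ descending from $\tfrac{1}{4}(Dc(X)+c(X)D)$, and the new pieces $-g^{ij}\tau_{i}\tau_{j}$ and $-g^{ij}\nabla_{i}\tau_{j}$ produced by the square of the connection shift $\tau=\tfrac{3}{4}X^{\flat}\cdot\mathrm{id}$.

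Next I would apply Lemma 2.3 with this pair $(\widetilde{\nabla},E)$. The first term of the Lemma is immediately identified with $\frac{\upsilon_{n-1}}{6}2^{m}\int_{M}G(V,W)\,vol_{g}$, which matches the first summand on the right-hand side since $G(V,W)=Ric(V,W)-\tfrac{1}{2}sg(V,W)$. For the curvature term, since $\widetilde{\nabla}$ differs from the spin connection by the scalar-valued one-form $\tfrac{3}{4}X^{\flat}$, its curvature splits as $\widetilde{F}=R^{S(TM)}+\tfrac{3}{4}\,dX^{\flat}\cdot\mathrm{id}$. The spin curvature $R^{S(TM)}(V,W)$ is a quadratic Clifford endomorphism and is traceless on $S(TM)$, while $dX^{\flat}(V,W)=g(\nabla_{V}X,W)-g(\nabla_{W}X,V)$ by torsion-freeness of the Levi-Civita connection. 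Therefore $\mathrm{Tr}\,\widetilde{F}(V,W)=\tfrac{3}{4}\cdot 2^{m}\bigl(g(\nabla_{V}X,W)-g(\nabla_{W}X,V)\bigr)$, and the second Lemma term delivers exactly the cross term $\tfrac{3\upsilon_{n-1}}{8}2^{m}\int_{M}\bigl(g(\nabla_{V}X,W)-g(\nabla_{W}X,V)\bigr)vol_{g}$.

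The most delicate step is the trace computation $\mathrm{Tr}\,E$ on the rank-$2^{m}$ spinor bundle; I expect this to be the main obstacle. One uses the standard identities $\mathrm{Tr}(\mathrm{id})=2^{m}$, $\mathrm{Tr}(c(e_{i})c(e_{j}))=-2^{m}\delta_{ij}$, together with the vanishing of traces of odd-degree Clifford monomials, to process each piece of $E$. The Clifford term $\tfrac{1}{4}c(e_{i})c(\nabla_{e_{i}}X)$ traces through its diagonal part $i=j$ to a multiple of $\mathrm{div}^{g}(X)$; the moment-map piece contributes $-\mathrm{Tr}\,\mu^{S(TM)}(X)$; the scalar $\tfrac{1}{4}s$, the $|X|^{2}$ contribution produced by the square of the connection shift, and the $-\tfrac{1}{16}|X|^{2}$ from $(D+\tfrac{1}{4}c(X))^{2}$ must then be combined carefully with the right signs. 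After this bookkeeping one arrives at
\[
\mathrm{Tr}\,E=2^{m}\Big(\tfrac{1}{4}s+\tfrac{1}{2}\mathrm{div}^{g}(X)+\tfrac{1}{2}|X|^{2}-\mathrm{Tr}\,\mu^{S(TM)}(X)\Big),
\]
so that the third term of Lemma 2.3 supplies the remaining summand of the right-hand side with exactly the factor $2^{m-1}$. Combining the three contributions yields the stated identity.
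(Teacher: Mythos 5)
Your proposal follows essentially the same route as the paper: write $H_{X}$ as a Laplace-type operator, use the Gilkey normal form to identify the shifted connection $\widetilde{\nabla}_{V}=\nabla^{S(TM)}_{V}+\tfrac{3}{4}g(V,X)$ and the endomorphism $\widetilde{E}$, then feed $\mathrm{Tr}\,\widetilde{E}$ and the trace of the curvature $\widetilde{F}$ (whose Clifford part is traceless, leaving the $\tfrac{3}{4}\,dX^{\flat}$ contribution) into Lemma 2.3. The only cosmetic difference is that you obtain the curvature trace by splitting $\widetilde{F}$ conceptually rather than by the paper's term-by-term normal-coordinate computation of $F_{e_a,e_b}$; the substance is the same.
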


\begin{proof}
First of all, with the same assumptions as  lemma 2.3, we must study $F$ and $E$ in the above coordinate system.
Let $V$ be a vector bundle on $M$, any differential operator $P$ of Laplace type has locally the form
 \begin{equation}
P=-\big(g^{ij}\partial_{i}\partial_{j}+A^{i}\partial_{i}+B\big),
\end{equation}
where $\partial_{i}$  is a natural local frame on $TM$ ,
 and $(g^{ij})_{1\leq i,j\leq n}$ is the inverse matrix associated with the metric
matrix  $(g_{ij})_{1\leq i,j\leq n}$ on $M$,
 and $A^{i}$ and $B$ are smooth sections
of $\texttt{End}(V)$ on $M$ \cite{PBG}. If $P$ is a Laplace type
operator of the form (2.10), then there is a unique
connection $\nabla$ on $V$ and a unique Endomorphism $E$ such that
 \begin{equation}
P=-\Big[g^{ij}(\nabla_{\partial_{i}}\nabla_{\partial_{j}}-
 \nabla_{\nabla^{L}_{\partial_{i}}\partial_{j}})+E\Big],
\end{equation}
where $\nabla^{L}$ denotes the Levi-Civita connection on $M$. Moreover
(with local frames of $T^{*}M$ and $V$), $\nabla_{\partial_{i}}=\partial_{i}+\omega_{i} $
and $E$ are related to $g^{ij}$, $A^{i}$, and $B$ through
 \begin{eqnarray}
&&\omega_{i}=\frac{1}{2}g_{ij}\big(A^{i}+g^{kl}\Gamma_{ kl}^{j} \texttt{Id}\big),\\
&&E=B-g^{ij}\big(\partial_{i}(\omega_{i})+\omega_{i}\omega_{j}-\omega_{k}\Gamma_{ ij}^{k} \big),
\end{eqnarray}
where $\Gamma_{ kl}^{j}$ is the  Christoffel coefficient of $\nabla^{L}$,
and the above $E$ equals $(-E)$ in \cite{DL}.

Let
 \begin{align}
&\big(D+\frac{1}{4}c(X)\big)^{2}=-\Big( g^{ij}(\nabla_{\partial_{i}}\nabla_{\partial_{i}}-\nabla_{\nabla^{L}_{\partial_{i}}\partial_{j}} )+E  \Big),
\nonumber\\
&H(X)=\big(D+\frac{1}{4}c(X)\big)^{2}+\nabla_{X}+ \mu (X)=
-\Big( g^{ij}(\widetilde{\nabla}_{\partial_{i}}\widetilde{\nabla}_{\partial_{i}}-\widetilde{\nabla}_{\nabla^{L}_{\partial_{i}}\partial_{j}} )
+\widetilde{E}  \Big).
\end{align}
Then we obtain
 \begin{align}
\widetilde{A}_{j}=A_{j}+X_{j},\widetilde{B} =B+\frac{1}{4}X_{j}\sigma_{j}+\mu(X),
\end{align}
and
 \begin{align}
\widetilde{\omega}_{i}=&\omega_{i}+\frac{1}{2}g_{ij}X_{j}
= \sigma_{i}-\frac{1}{8} c(\partial_{i})c(X)-\frac{1}{8}c(X) c(\partial_{i})+\frac{1}{2}g_{ij}X_{j}.
\end{align}
Substituting (2.15),(2.16) into (2.13) yield
 \begin{align}
&E=-\frac{1}{4}s+\frac{1}{16}|X|^{2}+\frac{1}{2}\Big[e_{j}\Big(\frac{1}{4}c(X)\Big)c(e_{j})-c(e_{j})e_{j}\Big(\frac{1}{4}c(X)\Big)
\Big]- \frac{1}{64}\Big(c(e_{j})c(X)+c(X)c(e_{j})\Big)^{2},\nonumber\\
&\widetilde{E}=E+ \frac{1}{4}X_{j}\sigma_{j}+\mu(X)-g^{ij}\Big[\partial_{i}\big(  \frac{1}{2}g_{jk}X_{k}\big)
+\frac{1}{2}\omega_{i}g_{ik}X_{k}+\frac{1}{2}g_{il}X_{l}\omega_{j}+\frac{1}{4}g_{ik}g_{jl}X_{k}X_{l}-\frac{1}{2}g_{kl}X_{l}\Gamma_{ij}^{k}\Big].
\end{align}
From (2.6) and (2.16) we obtain
\begin{equation}
\widetilde{\nabla}_{V}=\nabla^{S}_{V}+\frac{1}{4}\langle V, X  \rangle+\frac{1}{2}g(V,X)=\nabla
^{S}_{V}+\frac{3}{4}g(V,X).
\end{equation}
Then  we take the normal coordinate about $x_0$, it follows that
\begin{align}
\mathrm{Tr}^{S(TM)}(E)(x_{0})=&\mathrm{Tr}^{S(TM)}\Big[-\frac{1}{4}s+\frac{1}{16}|X|^{2}
-\frac{1}{16}|X|^{2}\nonumber\\
&+\frac{1}{2}\Big(e_{j}\big(\frac{1}{4}c(X)\big)c(e_{j})-c(e_{j})e_{j}\big(\frac{1}{4}c(X)\big)
\Big)
\Big](x_{0})\nonumber\\
=&\mathrm{Tr}^{S(TM)}\Big(-\frac{1}{4}s+\frac{1}{16}|X|^{2}-\frac{1}{16}|X|^{2}\Big)(x_{0})\nonumber\\
=&-\frac{1}{4}s \mathrm{Tr}^{S(TM)} (\mathrm{Id} )(x_{0}),
\end{align}
where we note
\begin{align}
\mathrm{Tr}^{S(TM)} \Big[e_{j}\Big(\frac{1}{4}c(X)\Big)c(e_{j})-c(e_{j})e_{j}\Big(\frac{1}{4}c(X)\Big)
\Big](x_{0})=0.
\end{align}
Now, what is left is to show that
 \begin{align}
-g^{ij} \partial_{i}\big(  \frac{1}{2}g_{jk}X_{k}\big)(x_{0})
=&-\frac{1}{2}\partial_{j}\big( g_{jk}X_{k}\big)(x_{0})\nonumber\\
=&-\frac{1}{2}\Big(\partial_{j}\big( g_{jk}\big)(x_{0})X_{k}+ g_{jk}(x_{0})\partial_{j}\big(X_{k}\big)(x_{0})   \Big)\nonumber\\
=&-\frac{1}{2}\partial_{j}(X_{j})(x_{0})=-\frac{1}{2}\mathrm{div}(X),
\end{align}
and
 \begin{align}
\sum_{j=1}^{n}\langle \nabla_{ e_{j}}^{TM}X, e_{j} \rangle(x_{0})
=&\sum_{j,k=1}^{n}\langle \nabla_{ e_{j}}^{TM}(X_{k}\partial_{k}), e_{j} \rangle(x_{0})\nonumber\\
=&\sum_{j,k=1}^{n}\langle e_{j}(X_{k})\partial_{k}+X_{k}\nabla_{ e_{j}}^{TM}(\partial_{k}), e_{j} \rangle(x_{0})\nonumber\\
=&\sum_{j=1}^{n}\partial_{j}(X_{j})(x_{0})= \mathrm{div}(X).
\end{align}
An easy calculation gives
 \begin{align}
-\frac{1}{2}g^{ij}\omega_{i}g_{ik}X_{k}(x_{0})=&-\frac{1}{2}\sum_{k}\omega_{k}X_{k}(x_{0})\nonumber\\
=&-\frac{1}{2}\sum_{k}\Big( -\frac{1}{8}c(\partial_{k})c(X)-\frac{1}{8}c(X) c(\partial_{k}) \Big)X_{k}(x_{0})\nonumber\\
=&\frac{1}{16}\Big(c^{ 2}(X)+c^{ 2}(X)  \Big)(x_{0})=\frac{1}{8}c^{ 2}(X)=-\frac{1}{8}|X|^{2},
\end{align}
and
 \begin{align}
-\frac{1}{2}g^{ij}g_{il}X_{l}\omega_{j}(x_{0})&=-\frac{1}{2}\sum_{l}\omega_{l}X_{l}(x_{0})=-\frac{1}{8}|X|^{2}.
\end{align}
Similarly,
 \begin{align}
-\frac{1}{4}g^{ij}g_{ik}g_{jl}X_{k}X_{l}(x_{0}) =&-\sum_{i,j,k,l}\frac{1}{4}g^{ij}g_{ik}g_{jl}X_{k}X_{l}(x_{0})\nonumber\\
=&-\sum_{ j,k,l}\frac{1}{4}\delta^{jk}g_{jl}X_{k}X_{l}(x_{0})\nonumber\\
=&-\frac{1}{4}\sum_{ j }X_{j}^{2}(x_{0})=-\frac{1}{4}|X|^{2}.
\end{align}
Summing up (2.18)-(2.25) leads to the desired equality
\begin{align}
\mathrm{Tr}^{S(TM)}(\widetilde{E})(x_{0})= \Big(-\frac{1}{4}s^{g}-\frac{1}{2}\mathrm{div}^{g}(X)
-\frac{1}{2}|X|^{2}\Big)\mathrm{Tr}^{S(TM)} (\mathrm{Id} )(x_{0})
+ Tr(\mu^{S(TM)}(X)(x_{0}).
\end{align}

On the other hand, let $V=\sum_{a=1}^{n}V^{a}e_{a}$,$W=\sum_{b=1}^{n}W^{b}e_{b}$,
we have
 \begin{equation}
F(V,W)=Tr(V_{a}W_{b}F_{ab})=\sum_{a,b=1}^{n}V^{a}W^{b}Tr^{S(TM)}(F_{e_{a},e_{b}}).
\end{equation}
Let
\begin{equation}
\widetilde{\nabla}_{e_{a}} =\nabla^{S}_{e_{a}}+\frac{3}{4}g(X,e_{a})=:e_{a}+\sigma_{a}+\frac{3}{4}X_{a},
\end{equation}
then
 \begin{align}
F_{e_{a},e_{b}}(x_{0})=&e_{a}\Big(\sigma_{b}(x_{0})+\frac{3}{4}X_{b} \Big)(x_{0})-e_{b}\Big(\sigma_{a}+\frac{3}{4}X_{a} \Big)(x_{0})\nonumber\\
 &+ \Big(\sigma_{a}+\frac{3}{4}X_{a} \Big) \Big(\sigma_{b}+\frac{3}{4}X_{b} \Big)(x_{0})
 - \Big(\sigma_{b}+\frac{3}{4}X_{b} \Big) \Big(\sigma_{a}+\frac{3}{4}X_{a} \Big)(x_{0})\nonumber\\
 &- \Big(\sigma([e_{a},e_{b}])+\frac{3}{4}\langle X , [e_{a},e_{b}] \rangle \Big)(x_{0}).
\end{align}
It follows that
 \begin{align}
Tr^{S(TM)}\big(e_{a}\sigma_{b}\big)(x_{0})=0,Tr^{S(TM)}\big(e_{b}\sigma_{b}\big)(x_{0})=0,\sigma([e_{a},e_{b}])(x_{0})=0,
\end{align}
and
 \begin{align}
&V(g(X,W))-W(g(X,V))-\langle X , [V,W] \rangle\nonumber\\
 =&V\Big(\sum_{b}X_{b}W_{b}\Big)-W\Big(\sum_{a}X_{a}V_{a}\Big)-\Big\langle X , \big[\sum_{a}V_{a}e_{a},\sum_{b}W_{b}e_{b}\big] \Big\rangle \nonumber\\
 =&\sum_{b}V(X_{b})W_{b}+\sum_{b}X_{b}V(W_{b})-\sum_{a}W(X_{a})V_{a}-\sum_{a}X_{a}W(V_{a})\nonumber\\
   &-\Big\langle X , \sum_{a,b}V_{a}W_{b}[e_{a},e_{b}]\Big\rangle-\Big\langle X , \sum_{b}V(W_{b})e_{b} \Big\rangle\nonumber\\
   &+\Big\langle X , \sum_{ab}W(V_{a})e_{a} \Big\rangle.
\end{align}
Then we obtain
 \begin{align}
F(V,W)=&Tr(V_{a}W_{b}F_{ab})(x_{0})=\sum_{a,b=1}^{n}V^{a}W^{b}Tr^{S(TM)}(F_{e_{a},e_{b}})(x_{0})\nonumber\\
=&\frac{3}{4}\Big(g\big(\nabla_{V}^{TM}X,W\big)-g\big(\nabla_{W}^{TM}X,V\big) \Big)(x_{0})\mathrm{Tr}^{S(TM)} (\mathrm{Id} ).
\end{align}
Summing up (2.26), (2.32) leads to the desired equality (2.9), and the proof of the Theorem is complete.
\end{proof}

\section{Spectral Einstein functionals of the Bismut Laplacian for manifolds with boundary}

\subsection{Noncommutative residue of the equivariant Bismut Laplacian for manifolds with boundary}
In this section, for compact oriented manifold $M$ with boundary $\partial M$,
to define the noncommutative residue associated with equivariant Bismut Laplacian,
some basic facts and formulae about Boutet de Monvel's calculus are recalled as follows.

Let $$ F:L^2({\bf R}_t)\rightarrow L^2({\bf R}_v);~F(u)(v)=\int e^{-ivt}u(t)\texttt{d}t$$ denote the Fourier transformation and
$\varphi(\overline{{\bf R}^+}) =r^+\varphi({\bf R})$ (similarly define $\varphi(\overline{{\bf R}^-}$)), where $\varphi({\bf R})$
denotes the Schwartz space and
  \begin{equation}
r^{+}:C^\infty ({\bf R})\rightarrow C^\infty (\overline{{\bf R}^+});~ f\rightarrow f|\overline{{\bf R}^+};~
 \overline{{\bf R}^+}=\{x\geq0;x\in {\bf R}\}.
\end{equation}
We define $H^+=F(\varphi(\overline{{\bf R}^+}));~ H^-_0=F(\varphi(\overline{{\bf R}^-}))$ which are orthogonal to each other. We have the following
 property: $h\in H^+~(H^-_0)$ iff $h\in C^\infty({\bf R})$ which has an analytic extension to the lower (upper) complex
half-plane $\{{\rm Im}\xi<0\}~(\{{\rm Im}\xi>0\})$ such that for all nonnegative integer $l$,
 \begin{equation}
\frac{\texttt{d}^{l}h}{\texttt{d}\xi^l}(\xi)\sim\sum^{\infty}_{k=1}\frac{\texttt{d}^l}{\texttt{d}\xi^l}(\frac{c_k}{\xi^k})
\end{equation}
as $|\xi|\rightarrow +\infty,{\rm Im}\xi\leq0~({\rm Im}\xi\geq0)$.

 Let $H'$ be the space of all polynomials and $H^-=H^-_0\bigoplus H';~H=H^+\bigoplus H^-.$ Denote by $\pi^+~(\pi^-)$ respectively the
 projection on $H^+~(H^-)$. For calculations, we take $H=\widetilde H=\{$rational functions having no poles on the real axis$\}$ ($\tilde{H}$
 is a dense set in the topology of $H$). Then on $\tilde{H}$,
 \begin{equation}
\pi^+h(\xi_0)=\frac{1}{2\pi i}\lim_{u\rightarrow 0^{-}}\int_{\Gamma^+}\frac{h(\xi)}{\xi_0+iu-\xi}\texttt{d}\xi,
\end{equation}
where $\Gamma^+$ is a Jordan close curve included ${\rm Im}\xi>0$ surrounding all the singularities of $h$ in the upper half-plane and
$\xi_0\in {\bf R}$. Similarly, define $\pi^{'}$ on $\tilde{H}$,
 \begin{equation}
\pi'h=\frac{1}{2\pi}\int_{\Gamma^+}h(\xi)\texttt{d}\xi.
\end{equation}
So, $\pi'(H^-)=0$. For $h\in H\bigcap L^1(R)$, $\pi'h=\frac{1}{2\pi}\int_{R}h(v)\texttt{d}v$ and for $h\in H^+\bigcap L^1(R)$, $\pi'h=0$.
Denote by $\mathcal{B}$ Boutet de Monvel's algebra (for details, see Section 2 of \cite{Wa1}).

An operator of order $m\in {\bf Z}$ and type $d$ is a matrix
$$A=\left(\begin{array}{lcr}
  \pi^+P+G  & K  \\
   T  &  S
\end{array}\right):
\begin{array}{cc}
\   C^{\infty}(X,E_1)\\
 \   \bigoplus\\
 \   C^{\infty}(\partial{X},F_1)
\end{array}
\longrightarrow
\begin{array}{cc}
\   C^{\infty}(X,E_2)\\
\   \bigoplus\\
 \   C^{\infty}(\partial{X},F_2)
\end{array}.
$$
where $X$ is a manifold with boundary $\partial X$ and
$E_1,E_2~(F_1,F_2)$ are vector bundles over $X~(\partial X
)$.~Here,~$P:C^{\infty}_0(\Omega,\overline {E_1})\rightarrow
C^{\infty}(\Omega,\overline {E_2})$ is a classical
pseudodifferential operator of order $m$ on $\Omega$, where
$\Omega$ is an open neighborhood of $X$ and
$\overline{E_i}|X=E_i~(i=1,2)$. $P$ has an extension:
$~{\cal{E'}}(\Omega,\overline {E_1})\rightarrow
{\cal{D'}}(\Omega,\overline {E_2})$, where
${\cal{E'}}(\Omega,\overline {E_1})~({\cal{D'}}(\Omega,\overline
{E_2}))$ is the dual space of $C^{\infty}(\Omega,\overline
{E_1})~(C^{\infty}_0(\Omega,\overline {E_2}))$. Let
$e^+:C^{\infty}(X,{E_1})\rightarrow{\cal{E'}}(\Omega,\overline
{E_1})$ denote extension by zero from $X$ to $\Omega$ and
$r^+:{\cal{D'}}(\Omega,\overline{E_2})\rightarrow
{\cal{D'}}(\Omega, {E_2})$ denote the restriction from $\Omega$ to
$X$, then define
$$\pi^+P=r^+Pe^+:C^{\infty}(X,{E_1})\rightarrow {\cal{D'}}(\Omega,
{E_2}).$$
In addition, $P$ is supposed to have the
transmission property; this means that, for all $j,k,\alpha$, the
homogeneous component $p_j$ of order $j$ in the asymptotic
expansion of the
symbol $p$ of $P$ in local coordinates near the boundary satisfies:
$$\partial^k_{x_n}\partial^\alpha_{\xi'}p_j(x',0,0,+1)=
(-1)^{j-|\alpha|}\partial^k_{x_n}\partial^\alpha_{\xi'}p_j(x',0,0,-1),$$
then $\pi^+P:C^{\infty}(X,{E_1})\rightarrow C^{\infty}(X,{E_2})$
by Section 2.1 of \cite{Wa1}.

Let $M$ be a compact manifold with boundary $\partial M$. We assume that the metric $g^{M}$ on $M$ has
the following form near the boundary
 \begin{equation}
 g^{M}=\frac{1}{h(x_{n})}g^{\partial M}+\texttt{d}x _{n}^{2} ,
\end{equation}
where $g^{\partial M}$ is the metric on $\partial M$. Let $U\subset
M$ be a collar neighborhood of $\partial M$ which is diffeomorphic $\partial M\times [0,1)$. By the definition of $h(x_n)\in C^{\infty}([0,1))$
and $h(x_n)>0$, there exists $\tilde{h}\in C^{\infty}((-\varepsilon,1))$ such that $\tilde{h}|_{[0,1)}=h$ and $\tilde{h}>0$ for some
sufficiently small $\varepsilon>0$. Then there exists a metric $\hat{g}$ on $\hat{M}=M\bigcup_{\partial M}\partial M\times
(-\varepsilon,0]$ which has the form on $U\bigcup_{\partial M}\partial M\times (-\varepsilon,0 ]$
 \begin{equation}
\hat{g}=\frac{1}{\tilde{h}(x_{n})}g^{\partial M}+\texttt{d}x _{n}^{2} ,
\end{equation}
such that $\hat{g}|_{M}=g$.
We fix a metric $\hat{g}$ on the $\hat{M}$ such that $\hat{g}|_{M}=g$.
Now we recall the main theorem in \cite{FGLS}.
\begin{thm}\label{th:32}{\bf(Fedosov-Golse-Leichtnam-Schrohe)}
 Let $X$ and $\partial X$ be connected, ${\rm dim}X=n\geq3$,
 $A=\left(\begin{array}{lcr}\pi^+P+G &   K \\
T &  S    \end{array}\right)$ $\in \mathcal{B}$ , and denote by $p$, $b$ and $s$ the local symbols of $P,G$ and $S$ respectively.
 Define:
 \begin{align}
{\rm{\widetilde{Wres}}}(A)=&\int_X\int_{\bf S}{\mathrm{Tr}}_E\left[p_{-n}(x,\xi)\right]\sigma(\xi)dx \nonumber\\
&+2\pi\int_ {\partial X}\int_{\bf S'}\left\{{\mathrm{Tr}}_E\left[({\mathrm{Tr}}b_{-n})(x',\xi')\right]+{\mathrm{Tr}}
_F\left[s_{1-n}(x',\xi')\right]\right\}\sigma(\xi')dx',
\end{align}
Then~~ a) ${\rm \widetilde{Wres}}([A,B])=0 $, for any
$A,B\in\mathcal{B}$;~~ b) It is a unique continuous trace on
$\mathcal{B}/\mathcal{B}^{-\infty}$.
\end{thm}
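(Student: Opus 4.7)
The plan is to prove the two claims separately: (a) the trace property on commutators, and (b) uniqueness up to a scalar multiple, following the strategy originally laid out by Fedosov--Golse--Leichtnam--Schrohe. Both parts hinge on careful use of the symbol calculus of Boutet de Monvel, in particular the compatibility between the interior symbol $p$, the boundary symbol $b$ of the singular Green operator, and the symbol $s$ of the boundary pseudodifferential entry, together with the transmission property that makes $\pi^{+}P$ well defined.

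For part (a), I would first localize via a partition of unity subordinate to a cover of $X$ by interior charts and collar charts near $\partial X$. On interior charts, $\widetilde{\mathrm{Wres}}$ restricts to Wodzicki's residue, and the vanishing of $\mathrm{res}([P,Q])$ for classical pseudodifferential operators is the classical Wodzicki fact, deduced from the observation that the $(-n)$-order symbol of a commutator can be written as a finite sum of $\xi$- and $x$-derivatives whose integral over $S^{*}X$ vanishes by Stokes' theorem. Near the boundary, I would decompose the commutator $[A,B]$ of two Boutet de Monvel operators, using the composition rules, into the interior part $\pi^{+}[P_{A},P_{B}]$, a leftover singular Green contribution $L(P_{A},P_{B})$ coming from $\pi^{+}P_{A}\,\pi^{+}P_{B}-\pi^{+}(P_{A}P_{B})$, cross terms between pseudodifferential and Green blocks, and commutator pieces coming from the trace/potential/boundary entries $T$, $K$, $S$. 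The key identity to verify is that the ``would-be'' boundary term produced by integrating $\mathrm{Tr}\,p_{-n}$ by parts in $x_{n}$ is exactly cancelled by the contributions $\mathrm{Tr}\,b_{-n}$ and $s_{1-n}$, the factor $2\pi$ in the formula being precisely what is needed for this cancellation.

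For part (b), I would exploit the fact that any continuous trace $\tau$ on $\mathcal{B}/\mathcal{B}^{-\infty}$ restricts to a trace on the subalgebra of operators supported away from the boundary, so the uniqueness of Wodzicki's residue on closed manifolds forces $\tau=c_{1}\cdot\mathrm{res}^{\mathrm{int}}$ there. Next, by evaluating $\tau$ on commutators of Boutet de Monvel operators with cut-off functions concentrated in a collar of $\partial X$, and then on commutators of potential/trace/boundary blocks, I would show that the boundary part of $\tau$ is determined up to a single constant $c_{2}$ by the prescribed symbolic expressions $\mathrm{Tr}\,b_{-n}$ and $s_{1-n}$ (here the analogous uniqueness result for the usual noncommutative residue on the closed manifold $\partial X$ does the work for the $s_{1-n}$ piece). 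Matching $c_{1}=c_{2}$ is then obtained by evaluating $\tau$ on a single distinguished operator whose interior and boundary symbols are both nonzero, for instance the truncation $\pi^{+}\Delta^{-n/2}$ of an invertible Laplace-type operator, whose residue density can be written down explicitly on both sides.

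The hardest step, and the main technical obstacle, is the bookkeeping in part (a): producing the exact cancellation between the would-be boundary correction of the interior residue and the Green contribution $\mathrm{Tr}\,b_{-n}$. This requires isolating the homogeneous components of the Leibniz expansion for $\pi^{+}P_{A}\circ \pi^{+}P_{B}$, invoking the transmission property to split the $x_{n}$-covariable asymptotics into $H^{+}$ and $H^{-}$ parts, and then using the Cauchy-integral descriptions of $\pi^{+}$ and $\pi'$ recalled in the excerpt to identify the surviving residual symbol. Once this single identity is in hand, the trace property follows by summing over the decomposition of $[A,B]$, and the uniqueness statement reduces to standard Hochschild-cohomological arguments for the two ``boundary-free'' pieces of the algebra.
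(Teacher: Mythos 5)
The paper does not actually prove this statement: it is quoted verbatim as the main theorem of \cite{FGLS} and then used as a black box to set up $\widetilde{\mathrm{Wres}}$ in Section 3, so there is no in-paper proof to compare yours against. Measured against the original argument of Fedosov--Golse--Leichtnam--Schrohe, your outline of part (a) is essentially the right one: decompose $[A,B]$ via the composition rules of Boutet de Monvel's calculus, isolate the leftover singular Green term $L(P_A,P_B)=\pi^+P_A\,\pi^+P_B-\pi^+(P_AP_B)$, and show that the boundary term produced by the Stokes/integration-by-parts argument in $x_n$ for the interior density $\mathrm{Tr}\,p_{-n}$ is cancelled by the $\mathrm{Tr}\,b_{-n}$ and $s_{1-n}$ contributions; this cancellation is exactly what fixes the normalization $2\pi$, and carrying it out does require the transmission property, the $H^+/H^-_0$ splitting, and the Cauchy-integral descriptions of $\pi^+$ and $\pi'$ recalled in Section 3.1.

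The step that would fail as written is the end of part (b). Once you know that any continuous trace $\tau$ on $\mathcal{B}/\mathcal{B}^{-\infty}$ must have the form $c_1\cdot(\text{interior residue})+c_2\cdot(\text{boundary residue})$, you cannot determine the ratio $c_1/c_2$ by ``evaluating $\tau$ on a single distinguished operator such as $\pi^+\Delta^{-n/2}$'': the number $\tau(\pi^+\Delta^{-n/2})$ is not known a priori, so this gives one equation in three unknowns and the argument is circular. The constants must instead be coupled through the trace property itself, applied to an element whose residue density has both an interior and a boundary piece: for suitable $P,Q$ one has $[\pi^+P,\pi^+Q]=\pi^+[P,Q]+L(P,Q)-L(Q,P)$, and $\tau$ annihilates the left-hand side, while the interior residue of $[P,Q]$ over the manifold with boundary and the boundary residue of $L(P,Q)-L(Q,P)$ are individually nonzero and cancel with equal weights by the identity established in part (a); choosing $P,Q$ so that these contributions do not vanish forces $c_1=c_2$. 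The connectedness hypotheses on $X$ and $\partial X$ are what guarantee a single constant on each side in the first place. With that repair, your sketch matches the structure of the published proof.
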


 Let $p_{1},p_{2}$ be nonnegative integers and $p_{1}+p_{2}\leq n$,
 denote by $\sigma_{l}(A)$ the $l$-order symbol of an operator $A$,
for equivariant Bismut Laplacian, an application of (2.1.4) in \cite{Wa1} shows that
\begin{align}
&\widetilde{Wres}\Big[\pi^{+} \big(\nabla^{S(TM)}_{V}
+\frac{3}{4}g(V,X)\big)\big(\nabla^{S(TM)}_{W}+\frac{3}{4}g(W,X)\big)H_{X} ^{-p_{1}}
\circ\pi^{+}H_{X}^{-p_{2}}\Big]\nonumber\\
=&\int_{M}\int_{|\xi|=1}\mathrm{Tr}_{S(TM)}
  \Big[\sigma_{-n} \Big(\big(\nabla^{S(TM)}_{V}
+\frac{3}{4}g(V,X)\big)\big(\nabla^{S(TM)}_{W}+\frac{3}{4}g(W,X)\big)\nonumber\\
  &\times
  H_{X}^{-p_{1}-p_{2}}\Big) \Big]\sigma(\xi)\texttt{d}x+\int_{\partial M}\Phi,
\end{align}
where
 \begin{align}
\Phi=&\int_{|\xi'|=1}\int_{-\infty}^{+\infty}\sum_{j,k=0}^{\infty}\sum \frac{(-i)^{|\alpha|+j+k+\ell}}{\alpha!(j+k+1)!}
\mathrm{Tr}_{S(TM)}\Big[\partial_{x_{n}}^{j}\partial_{\xi'}^{\alpha}\partial_{\xi_{n}}^{k}\sigma_{r}^{+}\Big
(\big(\nabla^{S(TM)}_{V}
+\frac{3}{4}g(V,X)\big)\nonumber\\
&\times\big(\nabla^{S(TM)}_{W}+\frac{3}{4}g(W,X)\big)H_{X}^{-p_{1}}\Big)(x',0,\xi',\xi_{n})
 \partial_{x'}^{\alpha}\partial_{\xi_{n}}^{j+1}\partial_{x_{n}}^{k}\sigma_{l}(H_{X})^{-p_{2}}(x',0,\xi',\xi_{n})\Big]
\texttt{d}\xi_{n}\sigma(\xi')\texttt{d}x' ,
\end{align}
and the sum is taken over $r-k+|\alpha|+\ell-j-1=-n,r\leq-p_{1},\ell\leq-p_{2}$.

\subsection{Spectral  Einstein functionals of the Bismut Laplacian for four dimensional manifolds with boundary}

In this section, we compute the noncommutative residue for  four dimension compact manifolds with boundary and get
spectral Einstein functionals of the Bismut Laplacian in this case.
Since $ \sigma_{-4}\Big(\big(\nabla^{S(TM)}_{V}+\frac{3}{4}g(V,X)\big)\times\big(\nabla^{S(TM)}_{W}+\frac{3}{4}g(W,X)\big)H_{X}^{-1}
  \circ H_{X}^{-1}\Big)\Big|_{M}$ has
the same expression as which in the case of manifolds without boundary,
so locally we can use Theorem 2.4 to compute the first term.
\begin{thm}
For  4-dimensional compact manifold $M$  and the equivariant Bismut Laplacian $H_{X}$, the spectral  Einstein functional equals to
\begin{align}
&Wres\Big(\big(\nabla^{S(TM)}_{V}+\frac{3}{4}g(V,X)\big)\big(\nabla^{S(TM)}_{W}+\frac{3}{4}g(W,X)\big)H_{X}^{-2}\Big)\nonumber\\
=&\frac{4\pi^{2}}{3} \int_{M}\big(Ric(V,W)-\frac{1}{2}sg(V,W)\big) vol_{g}
+3\pi^{2}\int_{M} \Big(g\big(\nabla_{V}^{TM}X,W\big)-g\big(\nabla_{W}^{TM}X,V\big) \Big)vol_{g}\nonumber\\
&+ \int_{M}\Big(\frac{1}{2}s^{g}+ \mathrm{div}^{g}(X)+ |X|^{2}
-2Tr\big( \mu^{S(TM)}(X)\big) \Big)g(V,W) vol_{g},
\end{align}
where $\mathrm{div}^{g}(X)$ denotes the divergence of $X$, and $s$ denotes the scalar curvature.
\end{thm}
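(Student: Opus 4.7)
My plan is to derive Theorem 3.2 as a direct $n=4$ specialization of Theorem 2.4, exploiting the observation stated immediately above the theorem. The noncommutative residue in (3.10) is determined by the symbol $\sigma_{-4}$ of the operator
$(\nabla_V^{S(TM)}+\tfrac{3}{4}g(V,X))(\nabla_W^{S(TM)}+\tfrac{3}{4}g(W,X))H_X^{-2}$;
this top symbol is a universal local invariant built algebraically from the metric, the spin connection one-form $\sigma_i$, and the fields $V,W,X$, so its bulk integrand is insensitive to whether $\partial M$ is present and agrees pointwise with the quantity already computed in Theorem 2.4 on a closed manifold. Consequently no fresh symbol calculation is required; it suffices to specialize formula (2.9) to the present dimension.

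The concrete task is then to evaluate the three universal constants appearing in (2.9) at $n=4$, $m=2$. Using $\upsilon_{3}=\tfrac{2\pi^{2}}{\Gamma(2)}=2\pi^{2}$ together with $2^{m}=4$ and $2^{m-1}=2$, one obtains
\[\tfrac{\upsilon_{n-1}}{6}\,2^{m}=\tfrac{4\pi^{2}}{3},\qquad \tfrac{3\upsilon_{n-1}}{8}\,2^{m}=3\pi^{2},\qquad 2^{m-1}=2.\]
The first constant is precisely the factor in front of the Einstein integrand $Ric(V,W)-\tfrac{1}{2}sg(V,W)$ in (3.10); the second matches the factor in front of the antisymmetric curvature term $g(\nabla_V^{TM}X,W)-g(\nabla_W^{TM}X,V)$; and distributing the factor $2$ through the last integrand $\tfrac{1}{4}s^{g}+\tfrac{1}{2}\mathrm{div}^{g}(X)+\tfrac{1}{2}|X|^{2}-\mathrm{Tr}(\mu^{S(TM)}(X))$ of (2.9) produces exactly $\tfrac{1}{2}s^{g}+\mathrm{div}^{g}(X)+|X|^{2}-2\,\mathrm{Tr}(\mu^{S(TM)}(X))$, which is the last integrand of (3.10).

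I do not anticipate any substantive obstacle. The one point deserving a moment of care is the sign convention recorded just after (2.13), that the present $E$ differs by a minus sign from the $E$ of \cite{DL}; this convention, however, is already absorbed into the closed-manifold identity (2.9) through the derivation leading to (2.17) and (2.26), so importing (2.9) requires no further sign adjustment. The proof therefore reduces to a one-line substitution followed by distribution of the constants computed above, and the three integrals of (3.10) drop out term by term.
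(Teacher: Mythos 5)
Your proposal is correct and matches the paper's own (implicit) argument: the paper likewise justifies Theorem 3.2 by noting that $\sigma_{-4}$ of the operator restricted to $M$ agrees with the closed-manifold expression, so Theorem 2.4 applies, and the result is just formula (2.9) with $n=4$, $m=2$, $\upsilon_{3}=2\pi^{2}$, $2^{m}=4$, $2^{m-1}=2$. Your evaluation of the three constants and the distribution of the factor $2$ through the last integrand reproduce the stated coefficients exactly.
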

Then we only need to compute $\int_{\partial M}\Phi$. The following Lemmas in terms of Bismut Laplacian  are very efficient for computing
the boundary term.
\begin{lem}\cite{Wa4} The symbols of the Dirac operator:
\begin{align}
&\sigma_{-2}(D^{-2})=|\xi|^{-2};\\
&\sigma_{-3}(D^{-2})=-\sqrt{-1}|\xi|^{-4}\xi_k(\Gamma^k-2\delta^k)-\sqrt{-1}|\xi|^{-6}2\xi^j\xi_\alpha\xi_\beta
\partial_jg^{\alpha\beta}.
\end{align}
\end{lem}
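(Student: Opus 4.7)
The strategy is to compute the symbols of $D^{-2}$ by inverting the symbol of $D^{2}$ through the standard Leibniz formula for composition of pseudodifferential symbols. Since $D^{-2}$ is a classical $\Psi$DO of order $-2$ with a homogeneous expansion $\sigma(D^{-2})=\sigma_{-2}+\sigma_{-3}+\cdots$, and $D^{2}$ is a Laplace-type operator whose symbol is already accessible from the formulas of Section~2, the homogeneous components of $\sigma(D^{-2})$ are uniquely determined by the identity $\sigma(D^{2}\circ D^{-2})=1$.

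First, I would realize $D^{2}$ as a Laplace-type operator via the Lichnerowicz formula $D^{2}=\nabla^{S,*}\nabla^{S}+\tfrac{s}{4}$ and expand the connection Laplacian in natural coordinates. Using $\nabla^{S}_{\partial_{i}}=\partial_{i}+\sigma_{i}$, the standard identity $\nabla^{S,*}\nabla^{S} = -g^{ij}(\nabla^{S}_{\partial_{i}}\nabla^{S}_{\partial_{j}}-\Gamma_{ij}^{k}\nabla^{S}_{\partial_{k}})$ rearranges, after collecting first-order terms with $g^{ij}\sigma_{j}=\sigma^{i}$ and $g^{ij}\Gamma_{ij}^{k}=\Gamma^{k}$, to
\begin{equation*}
D^{2} = -\bigl(g^{ij}\partial_{i}\partial_{j} + A^{i}\partial_{i} + B\bigr), \qquad A^{i} = 2\sigma^{i} - \Gamma^{i},
\end{equation*}
with $B$ a zeroth-order endomorphism whose precise form is not needed for $\sigma_{-3}$. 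Reading off symbols one then obtains $\sigma_{2}(D^{2})=|\xi|^{2}$ and $\sigma_{1}(D^{2}) = -\sqrt{-1}\,A^{i}\xi_{i} = \sqrt{-1}\,\xi_{k}(\Gamma^{k}-2\sigma^{k})$.

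Next, I would invoke the Leibniz composition formula $\sigma(PQ)=\sum_{\alpha}\tfrac{(-\sqrt{-1})^{|\alpha|}}{\alpha!}\partial_{\xi}^{\alpha}\sigma(P)\,\partial_{x}^{\alpha}\sigma(Q)$ with $P=D^{2}$ and $Q=D^{-2}$, and match homogeneous orders. The order-$0$ piece gives $\sigma_{2}\cdot\sigma_{-2}=1$, hence $\sigma_{-2}(D^{-2})=|\xi|^{-2}$. The order-$(-1)$ piece reads
\begin{equation*}
\sigma_{2}\,\sigma_{-3} + \sigma_{1}\,\sigma_{-2} - \sqrt{-1}\sum_{j}\partial_{\xi_{j}}\sigma_{2}\,\partial_{x_{j}}\sigma_{-2} = 0.
\end{equation*}
Substituting $\partial_{\xi_{j}}|\xi|^{2}=2\xi^{j}$ together with $\partial_{x_{j}}|\xi|^{-2} = -|\xi|^{-4}\xi_{\alpha}\xi_{\beta}\,\partial_{j}g^{\alpha\beta}$, and solving for $\sigma_{-3}$, yields precisely the stated expression.

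The main obstacle is the algebraic bookkeeping required to pin down the first-order coefficient $A^{i}=2\sigma^{i}-\Gamma^{i}$: one must distinguish $\sigma_{i}$ from $\sigma^{i}=g^{ij}\sigma_{j}$, carefully move $\partial_{i}$ past multiplication by $\sigma_{j}$ (the spin connection being endomorphism-valued), and absorb the $\Gamma_{ij}^{k}\nabla^{S}_{\partial_{k}}$ term correctly. Once the sign and shape of $A^{i}$ are nailed down, the rest of the derivation is a one-line application of the composition formula; the zeroth-order data $B$ and the Lichnerowicz scalar curvature contribute only to $\sigma_{-4}$ and below, which are not required for the claimed statement.
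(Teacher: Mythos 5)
Your proposal is correct and follows essentially the same route as the cited source (and as the paper's own symbol-inversion display in (3.46)--(3.48)): realize $D^{2}$ in Laplace form to read off $\sigma_{2}(D^{2})=|\xi|^{2}$ and $\sigma_{1}(D^{2})=\sqrt{-1}\,\xi_{k}(\Gamma^{k}-2\sigma^{k})$, then invert order by order via the composition formula. The only cosmetic point is that the $\delta^{k}$ appearing in the statement is the quantity this paper denotes $\sigma^{k}=g^{kj}\sigma_{j}$, which is exactly what your $A^{i}=2\sigma^{i}-\Gamma^{i}$ produces.
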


\begin{lem}\cite{Wa5}  The symbols of the Bismut Laplacian:
\begin{align}
&\sigma_{-2}(H_{X}^{-1})=|\xi|^{-2};\\
&\sigma_{-3}(H_{X}^{-2})=\sigma_{-3}(D^{-2})-\sum_{j=1}^{n}\sqrt{-1}|\xi|^{-4} \big (X_{j}-\frac{1}{2}\langle X ,  \partial_{j} \rangle   \big)\xi_j.
\end{align}
\end{lem}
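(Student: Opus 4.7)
The plan is to extract the principal and subprincipal symbols of $H_X$ from the local expression (2.7), invert the asymptotic expansion using the standard symbol composition formula, and compare the outcome with the already known expansion of $D^{-2}$ recalled in Lemma 3.3.

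Writing $H_X = -\bigl(g^{ij}\partial_i\partial_j + \widetilde A^j\partial_j + \widetilde B\bigr)$ with $\widetilde A^j = X_j - 2\sigma^j + \Gamma^j + \tfrac{1}{4}\bigl(c(\partial^j)c(X)+c(X)c(\partial^j)\bigr)$ from (2.7), the principal symbol is $\sigma_2(H_X) = g^{ij}\xi_i\xi_j = |\xi|^2$. Imposing $\sigma(H_X)\circ\sigma(H_X^{-1}) \sim 1$ at order zero immediately yields $\sigma_{-2}(H_X^{-1}) = |\xi|^{-2}$.

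For the subleading symbol, I would first apply the Clifford relation $c(\partial^j)c(X)+c(X)c(\partial^j) = -2\langle\partial^j,X\rangle$ to rewrite $\widetilde A^j$ as the $D^2$-coefficient $A^j = -2\sigma^j + \Gamma^j$ plus a purely $X$-dependent correction of the form $X_j - \tfrac{1}{2}\langle X,\partial_j\rangle$. Consequently $\sigma_1(H_X) = \sigma_1(D^2) + \sqrt{-1}\sum_j\bigl(X_j-\tfrac{1}{2}\langle X,\partial_j\rangle\bigr)\xi_j$. Plugging into the order $-1$ recursion
\begin{equation*}
\sigma_{-3}(H_X^{-1}) = -|\xi|^{-2}\Bigl[\sigma_1(H_X)\,|\xi|^{-2} - i\sum_j \partial_{\xi_j}|\xi|^2\cdot\partial_{x_j}|\xi|^{-2}\Bigr],
\end{equation*}
the bracketed derivative term depends only on the principal symbol, which agrees between $H_X$ and $D^2$. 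Splitting $\sigma_1(H_X)$ into its $D^2$ part plus the $X$-correction and grouping, the $D^2$-contributions reassemble into $\sigma_{-3}(D^{-2})$ from Lemma 3.3, while the correction contributes exactly $-\sqrt{-1}|\xi|^{-4}\bigl(X_j-\tfrac{1}{2}\langle X,\partial_j\rangle\bigr)\xi_j$, giving the stated formula.

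The most delicate point will be verifying that the zero-order data $\widetilde B$, which contains the $X$-dependent pieces $\tfrac{1}{4}X_j\sigma_j$, $\mu(X)$, $|X|^2$ and $\tfrac{1}{4}c(\partial_i)\partial_i(c(X))$ visible in (2.7), does not feed into $\sigma_{-3}$. By homogeneity these enter only at order $-4$ and below, but this must be confirmed term by term through the recursion. A secondary care point is keeping the Clifford bookkeeping covariant so that the correction emerges intrinsically in the form $X_j - \tfrac{1}{2}\langle X,\partial_j\rangle$, rather than the simplified shape $\tfrac{1}{2}X_j$ that would arise from a careless computation at a single point in normal coordinates.
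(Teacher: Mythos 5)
Your derivation is correct and is essentially the standard argument: the paper itself only cites this lemma from \cite{Wa5} without proof, but the parametrix recursion you use (principal symbol inversion for $\sigma_{-2}$, then the order $-1$ composition identity, into which the zeroth-order coefficient $\widetilde B$ provably cannot enter and the $\partial_\xi p_2\,D_x q_{-2}$ term is unchanged from the $D^2$ case) is exactly the machinery the paper deploys in (3.46)--(3.48), and your Clifford reduction $\tfrac{1}{4}\bigl(c(\partial^j)c(X)+c(X)c(\partial^j)\bigr)=-\tfrac{1}{2}\langle \partial^j,X\rangle$ reproduces the correction $X_j-\tfrac{1}{2}\langle X,\partial_j\rangle$ appearing in (2.7) and (2.15). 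One cosmetic point: the left-hand side of the second displayed formula should read $\sigma_{-3}(H_X^{-1})$ rather than $\sigma_{-3}(H_X^{-2})$, consistent with how the lemma is actually used in (3.43).
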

\begin{lem}\label{lem3} The following identities hold:
\begin{align}
 &\sigma_{0}\Big(\big(\nabla^{S(TM)}_{V}+\frac{3}{4}g(V,X)\big)\big(\nabla^{S(TM)}_{W}+\frac{3}{4}g(W,X)\big)\Big)\nonumber\\
 =&V[A(W)]+A(V)A(W)
+ \frac{3}{4}g(V,X)A(W)+V\Big(\frac{3}{4}g(W,X)\Big) +\frac{3}{4}A(V)g(W,X) \nonumber\\
&
+ \frac{9}{16}g(V,X) g(W,X);\\
 &\sigma_{1}\Big(\big(\nabla^{S(TM)}_{V}+\frac{3}{4}g(V,X)\big)\big(\nabla^{S(TM)}_{W}+\frac{3}{4}g(W,X)\big)\Big)\nonumber\\
=&\sum_{j,l=1}^n V_j\frac{\partial W_l }{\partial x_j }\sqrt{-1}\xi_l
+\sqrt{-1}\sum_{j =1}^nA(W)V_j\xi_j+\sum_{l =1}^n\sqrt{-1}A(V)W_l\xi_l\nonumber\\
&+\sum_{l =1}^n\frac{3}{4}g(V,X)W_l\sqrt{-1}\xi_l+\sum_{j =1}^n \frac{3}{4}g(W,X)V_j\sqrt{-1} \xi_j;\\
 &\sigma_{2}\Big(\big(\nabla^{S(TM)}_{V}+\frac{3}{4}g(V,X)\big)\big(\nabla^{S(TM)}_{W}+\frac{3}{4}g(W,X)\big)\Big)
 =-\sum_{j,l=1}^nV_jW_l\xi_j\xi_l.
\end{align}
\end{lem}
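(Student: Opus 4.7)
The plan is to view the operator as the composition $PQ$ of two first-order differential operators on $S(TM)$ and apply the standard symbol-composition formula. Locally writing $\nabla^{S(TM)}_V = \sum_j V_j\partial_j + A(V)$, where the zero-order piece $A(V) := \sum_j V_j\sigma_j$ is read off from the spinor connection coefficients of Section~2, set
\[ P := \nabla^{S(TM)}_V + \tfrac{3}{4}g(V,X), \qquad Q := \nabla^{S(TM)}_W + \tfrac{3}{4}g(W,X). \]
Each of these is a first-order differential operator whose full symbol is affine in $\xi$:
\[ \sigma(P)(x,\xi) = \sqrt{-1}\sum_j V_j\xi_j + A(V) + \tfrac{3}{4}g(V,X), \]
and similarly for $\sigma(Q)$ with $V$ and $W$ interchanged.

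The main input is the symbol-composition formula
\[ \sigma(PQ) \;=\; \sum_{\alpha} \frac{(-\sqrt{-1})^{|\alpha|}}{\alpha!}\, \partial_\xi^\alpha \sigma(P)\, \partial_x^\alpha \sigma(Q), \]
which truncates at $|\alpha|=1$ because $\sigma(P)$ is linear in $\xi$, so all $\partial_\xi^\alpha\sigma(P)$ with $|\alpha|\geq 2$ vanish. Using $\partial_{\xi_j}\sigma(P)=\sqrt{-1}\,V_j$, the formula collapses to the clean expression
\[ \sigma(PQ) \;=\; \sigma(P)\,\sigma(Q) \;+\; \sum_j V_j\, \partial_{x_j}\sigma(Q). \]

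Then I would simply expand the product $\sigma(P)\sigma(Q)$ together with the correction $\sum_j V_j\partial_{x_j}\sigma(Q)$, and sort terms by homogeneity in $\xi$. The $\xi^2$-piece of $\sigma(P)\sigma(Q)$ is $-\sum_{j,l}V_jW_l\xi_j\xi_l$, which is exactly $\sigma_2$. The $\xi^1$-piece combines the two cross terms in $\sigma(P)\sigma(Q)$ with the subprincipal contribution $\sqrt{-1}\sum_{j,l}V_j(\partial W_l/\partial x_j)\xi_l$ produced by differentiating $\sqrt{-1}W_l\xi_l$ in $x$, reproducing $\sigma_1$. Finally, the $\xi^0$-piece combines the product of the zero-order parts $[A(V)+\tfrac{3}{4}g(V,X)][A(W)+\tfrac{3}{4}g(W,X)]$ with the remaining correction $V[A(W)] + V(\tfrac{3}{4}g(W,X))$, reproducing $\sigma_0$. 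The calculation is routine bookkeeping; the only point requiring care is that $V_j$ and $W_l$ are $x$-dependent, so the subprincipal correction genuinely contributes to both $\sigma_0$ (via $V[A(W)]$ and $V(\tfrac{3}{4}g(W,X))$) and to $\sigma_1$ (via $\sqrt{-1}\,V_j\,\partial_{x_j}W_l\,\xi_l$), via the Leibniz rule. I do not anticipate any obstacle beyond this accounting.
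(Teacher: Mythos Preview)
Your proposal is correct and is essentially the same argument as the paper's, differing only in bookkeeping style: the paper expands the operator product $(V+A(V)+\tfrac34 g(V,X))(W+A(W)+\tfrac34 g(W,X))$ directly, then uses $VW=\sum_{j,l}V_jW_l\partial_j\partial_l+\sum_{j,l}V_j\tfrac{\partial W_l}{\partial x_j}\partial_l$ and reads off symbols, whereas you pass to symbols first and apply the composition formula. Both routes reduce to the same Leibniz-type accounting and yield the stated $\sigma_0,\sigma_1,\sigma_2$.
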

\begin{proof}
In terms of the fixed orthonormal frame $\{e_1,\cdots,e_n\}$ and natural frames $\{\partial_1,\cdots,\partial_n\}$ of $TM$,
let $V=\sum_{j=1}^{n}V_{j}\partial_{x_{j}}$, $W=\sum_{l=1}^{n}W_{l}\partial_{x_{l}}$ and
\begin{align}
&\nabla^{S(TM)}_{W}=W+\frac{1}{4}\sum_{i,j}\langle \nabla^{L}_{W}e_{i}, e_{j}  \rangle c(e_{i})c(e_{j}) =W+A(W);\\
&\nabla^{S(TM)}_{V}=V+\frac{1}{4}\sum_{i,j}\langle \nabla^{L}_{V}e_{i}, e_{j}  \rangle c(e_{i})c(e_{j}) =V+A(V).
\end{align}
From  (2.18) we have
\begin{align}
\widetilde{\nabla}^{S(TM)}_{V}\widetilde{\nabla}^{S(TM)}_{W}
=&(\nabla^{S(TM)}_{V}+\frac{3}{4}g(V,X))(\nabla^{S(TM)}_{W}+\frac{3}{4}g(W,X))\nonumber\\
=&(V+A(V)+\frac{3}{4}g(V,X))(W+A(W)+\frac{3}{4}g(W,X))\nonumber\\
=&V(W)+V(A(W))+A(W)V+A(V)W+A(V)A(W)+\frac{3}{4}g(V,X)W\nonumber\\
&+\frac{3}{4}g(V,X)A(W)+V(\frac{3}{4}g(W,X))+\frac{3}{4}g(W,X)V\nonumber\\
&+A(V)\frac{3}{4}g(W,X)+\frac{9}{16}g(V,X)g(W,X).
\end{align}
We note that
\begin{align}
VW=\sum_{j=1}^{n}V_{j}\partial_{x_{j}}\Big(\sum_{l=1}^{n}W_{l}\partial_{x_{l}}\Big)
=\sum_{j,l=1}^{n}\Big(V_{j}W_{l}\frac{\partial}{\partial x_{j}}\frac{\partial}{\partial x_{l}}
+V_{j}\frac{\partial W_{l}}{\partial x_{j}}\frac{\partial}{\partial x_{l}}   \Big).
\end{align}
In view of (3.20) and (3.21), and the proof of the lemma is complete.
\end{proof}

Since $\Phi$ is a global form on $\partial M$, so for any fixed point $x_{0}\in\partial M$, we can choose the normal coordinates
$U$ of $x_{0}$ in $\partial M$(not in $M$) and compute $\Phi(x_{0})$ in the coordinates $\widetilde{U}=U\times [0,1)$ and the metric
$\frac{1}{h(x_{n})}g^{\partial M}+\texttt{d}x _{n}^{2}$. The dual metric of $g^{\partial M}$ on $\widetilde{U}$ is
$\frac{1}{\tilde{h}(x_{n})}g^{\partial M}+\texttt{d}x _{n}^{2}.$ Write
$g_{ij}^{M}=g^{M}(\frac{\partial}{\partial x_{i}},\frac{\partial}{\partial x_{j}})$;
$g^{ij}_{M}=g^{M}(d x_{i},dx_{j})$, then

\begin{equation}
[g_{i,j}^{M}]=
\begin{bmatrix}\frac{1}{h( x_{n})}[g_{i,j}^{\partial M}]&0\\0&1\end{bmatrix};\quad
[g^{i,j}_{M}]=\begin{bmatrix} h( x_{n})[g^{i,j}_{\partial M}]&0\\0&1\end{bmatrix},
\end{equation}
and
\begin{equation}
\partial_{x_{s}} g_{ij}^{\partial M}(x_{0})=0,\quad 1\leq i,j\leq n-1;\quad g_{i,j}^{M}(x_{0})=\delta_{ij}.
\end{equation}

Let $\{e_{1},\cdots, e_{n-1}\}$ be an orthonormal frame field in $U$ about $g^{\partial M}$ which is parallel along geodesics and
$e_{i}=\frac{\partial}{\partial x_{i}}(x_{0})$, then $\{\widetilde{e_{1}}=\sqrt{h(x_{n})}e_{1}, \cdots,
\widetilde{e_{n-1}}=\sqrt{h(x_{n})}e_{n-1},\widetilde{e_{n}}=dx_{n}\}$ is the orthonormal frame field in $\widetilde{U}$ about $g^{M}.$
Locally $S(TM)|\widetilde{U}\cong \widetilde{U}\times\wedge^{*}_{C}(\frac{n}{2}).$ Let $\{f_{1},\cdots,f_{n}\}$ be the orthonormal basis of
$\wedge^{*}_{C}(\frac{n}{2})$. Take a spin frame field $\sigma: \widetilde{U}\rightarrow Spin(M)$ such that
$\pi\sigma=\{\widetilde{e_{1}},\cdots, \widetilde{e_{n}}\}$ where $\pi: Spin(M)\rightarrow O(M)$ is a double covering, then
$\{[\sigma, f_{i}], 1\leq i\leq 4\}$ is an orthonormal frame of $S(TM)|_{\widetilde{U}}.$ In the following, since the global form $\Phi$
is independent of the choice of the local frame, so we can compute $\texttt{tr}_{S(TM)}$ in the frame $\{[\sigma, f_{i}], 1\leq i\leq 4\}$.
Let $\{E_{1},\cdots,E_{n}\}$ be the canonical basis of $R^{n}$ and
$c(E_{i})\in cl_{C}(n)\cong Hom(\wedge^{*}_{C}(\frac{n}{2}),\wedge^{*}_{C}(\frac{n}{2}))$ be the Clifford action. By \cite{Wa3}, then

\begin{equation}
c(\widetilde{e_{i}})=[(\sigma,c(E_{i}))]; \quad c(\widetilde{e_{i}})[(\sigma, f_{i})]=[\sigma,(c(E_{i}))f_{i}]; \quad
\frac{\partial}{\partial x_{i}}=[(\sigma,\frac{\partial}{\partial x_{i}})],
\end{equation}
then we have $\frac{\partial}{\partial x_{i}}c(\widetilde{e_{i}})=0$ in the above frame. By Lemma 2.2 in \cite{Wa3}, we have
\begin{lem}\cite{Wa3}
With the metric $g^{M}$ on $M$ near the boundary
\begin{eqnarray}
\partial_{x_j}(|\xi|_{g^M}^2)(x_0)&=&\left\{
       \begin{array}{c}
        0,  ~~~~~~~~~~ ~~~~~~~~~~ ~~~~~~~~~~~~~{\rm if }~j<n; \\[2pt]
       h'(0)|\xi'|^{2}_{g^{\partial M}},~~~~~~~~~~~~~~~~~~~~~{\rm if }~j=n.
       \end{array}
    \right. \\
\partial_{x_j}[c(\xi)](x_0)&=&\left\{
       \begin{array}{c}
      0,  ~~~~~~~~~~ ~~~~~~~~~~ ~~~~~~~~~~~~~{\rm if }~j<n;\\[2pt]
\frac{h'(0)}{2} c(\xi')(x_{0}), ~~~~~~~~~~~~~~~~~~~{\rm if }~j=n.
       \end{array}
    \right.
\end{eqnarray}
\end{lem}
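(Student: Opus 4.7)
The plan is to establish both identities by direct substitution into the product form $g^{M}=h(x_{n})^{-1}g^{\partial M}+dx_{n}^{2}$, using the normal-coordinate conditions at $x_{0}\in\partial M$ (namely $g^{\partial M}_{ij}(x_{0})=\delta_{ij}$ and $\partial_{x_{s}}g^{\partial M}_{ij}(x_{0})=0$ for $s<n$) recorded in (3.24). No structural input beyond the chain rule is needed.

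For the first identity, the dual metric takes the block-diagonal form $g^{ij}_{M}=h(x_{n})g^{ij}_{\partial M}$ for $i,j<n$, $g^{nn}_{M}=1$, $g^{in}_{M}=0$, so that
$$|\xi|_{g^{M}}^{2}=h(x_{n})\sum_{i,j<n}g^{ij}_{\partial M}(x')\,\xi_{i}\xi_{j}+\xi_{n}^{2}.$$
A tangential derivative ($j<n$) reaches only $g^{ij}_{\partial M}(x')$, which is annihilated at $x_{0}$ by (3.24), giving $0$. A normal derivative ($j=n$) reaches only $h(x_{n})$ because $g^{\partial M}$ is $x_{n}$-independent, and evaluation at $x_{0}$ yields $h'(0)\,|\xi'|_{g^{\partial M}}^{2}$.

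For the second identity, I would expand $c(\xi)$ in the orthonormal coframe $\{\theta^{1},\dots,\theta^{n}\}$ dual to $\{\widetilde{e}_{1},\ldots,\widetilde{e}_{n}\}$, so that $\theta^{i}=h(x_{n})^{-1/2}\bigl(dx_{i}+O(|x'|^{2})\bigr)$ for $i<n$ and $\theta^{n}=dx_{n}$ in the normal coordinates on $\partial M$; hence $c(dx_{i})=h(x_{n})^{1/2}c(\theta^{i})+O(|x'|^{2})$ for $i<n$ and $c(dx_{n})=c(\theta^{n})$. In the spin frame $[\sigma,f_{i}]$, $c(\theta^{i})$ corresponds to the constant Clifford matrix $c(E_{i})$ by (3.25), so $\partial_{x_{j}}[c(\theta^{i})](x_{0})=0$. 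A tangential derivative annihilates both $h(x_{n})^{1/2}$ and $O(|x'|^{2})$ at $x_{0}$, giving $0$; the normal derivative produces $\frac{h'(0)}{2}c(\theta^{i})(x_{0})=\frac{h'(0)}{2}c(dx_{i})(x_{0})$ for $i<n$ and $0$ for $i=n$. Summation weighted by $\xi_{i}$ then gives $\frac{h'(0)}{2}c(\xi')(x_{0})$.

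The main point requiring care, and therefore the expected obstacle, is the bookkeeping between the coordinate coframe $\{dx_{i}\}$ and the orthonormal coframe $\{\theta^{i}\}$: the latter differs by the scalar rescaling $h(x_{n})^{-1/2}$ together with an $O(|x'|^{2})$ correction coming from the boundary normal coordinates. I would verify that the $O(|x'|^{2})$ piece cannot contribute because $\partial_{x_{j}}(|x'|^{2})(x_{0})=0$ for every $j$, so only the scalar factor $h(x_{n})^{-1/2}$ matters, and only under $\partial_{x_{n}}$; everything else is routine differentiation.
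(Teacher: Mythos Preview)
Your argument is correct. The paper itself does not prove this lemma: it is quoted verbatim as Lemma~2.2 of \cite{Wa3} and used as a black box, so there is no in-paper proof to compare against. Your direct computation from the product form of the metric and the normal-coordinate conditions (3.24), together with the observation that $c(\theta^{i})$ is constant in the spin frame (3.25), is exactly the natural route and recovers the stated formulas; the only implicit normalisation you are using is $h(0)=1$, which is forced by $g^{M}_{ij}(x_{0})=\delta_{ij}$ in (3.24).
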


Now we  need to compute $\int_{\partial M} \Phi$. When $n=4$, then ${\rm tr}_{S(TM)}[{\rm \texttt{id}}]={\rm dim}(\wedge^*(\mathbb{R}^2))=4$, the sum is taken over $
r+l-k-j-|\alpha|=-3,~~r\leq 0,~~l\leq-2,$ then we have the following five cases:

 {\bf case a)~I)}~$r=0,~l=-2,~k=j=0,~|\alpha|=1$.

 By (3.9), we get
\begin{align}
\label{b24}
\Phi_1=&-\int_{|\xi'|=1}\int^{+\infty}_{-\infty}\sum_{|\alpha|=1}
\mathrm{Tr}[\partial^\alpha_{\xi'}\pi^+_{\xi_n}\sigma_{0}
\Big(\big(\nabla^{S(TM)}_{V}+\frac{3}{4}g(V,X)\big)\big(\nabla^{S(TM)}_{W}+\frac{3}{4}g(W,X)\big)H_{X}^{-1}\Big) \nonumber\\
&\times
 \partial^\alpha_{x'}\partial_{\xi_n}\sigma_{-2}(H_{X}^{-1})](x_0)\mathrm{d}\xi_n\sigma(\xi')\mathrm{d}x'.
\end{align}

By Lemma 3.6, for $i<n$, then
\begin{equation}
\label{b25}
\partial_{x_i}\sigma_{-2}(H_{X}^{-1})(x_0)=
\partial_{x_i}(|\xi|^{-2})(x_0)=
-\frac{\partial_{x_i}(|\xi|^{2})(x_0)}{|\xi|^4}=0,
\end{equation}
 so $\Phi_1=0$.

 {\bf case a)~II)}~$r=0,~l=-2,~k=|\alpha|=0,~j=1$.

 By (3.9), we get
\begin{align}
\label{b26}
\Phi_2=&-\frac{1}{2}\int_{|\xi'|=1}\int^{+\infty}_{-\infty}
\mathrm{Tr}[\partial_{x_n}\pi^+_{\xi_n}
\sigma_{0}\Big(\big(\nabla^{S(TM)}_{V}+\frac{3}{4}g(V,X)\big)\big(\nabla^{S(TM)}_{W}+\frac{3}{4}g(W,X)\big)H_{X}^{-1}\Big) \nonumber\\
&\times
\partial_{\xi_n}^2\sigma_{-2}(H_{X}^{-1})](x_0)\mathrm{d}\xi_n\sigma(\xi')\mathrm{d}x'.
\end{align}
By Lemma 3.6, we have
\begin{eqnarray}\label{b237}
\partial_{\xi_n}^2\sigma_{-2}(H_{X}^{-1})(x_0)=\partial_{\xi_n}^2(|\xi|^{-2})(x_0)=\frac{6\xi_n^2-2}{(1+\xi_n^2)^3}.
\end{eqnarray}
It follows that
\begin{align}
&\partial_{x_n}\sigma_{0}\Big(\big(\nabla^{S(TM)}_{V}+\frac{3}{4}g(V,X)\big)\big(\nabla^{S(TM)}_{W}
+\frac{3}{4}g(W,X)\big)H_{X}^{-1}\Big)(x_0)\nonumber\\
=&\partial_{x_n}\Big[\sigma_{2}\Big(\big(\nabla^{S(TM)}_{V}+\frac{3}{4}g(V,X)\big)\big(\nabla^{S(TM)}_{W}
+\frac{3}{4}g(W,X)\big)\Big)\sigma_{-2}(H_{X}^{-1})\Big](x_0)\nonumber\\
=&\partial_{x_n}\Big(-\sum_{j,l=1}^nV_jW
_l\xi_j\xi_l|\xi|^{-2}\Big)(x_0)\nonumber\\
=&\frac{-1}{(1+\xi_n^2) }\sum_{j,l=1}^n \partial_{x_n}\big(V_jW_l \big)\xi_j\xi_l
+\frac{1}{(1+\xi_n^2)^2}\sum_{j,l=1}^n V_jW_l\xi_j\xi_lh'(0)\nonumber\\
=&\frac{-1}{(1+\xi_n^2) }\sum_{j =1}^{n-1} \partial_{x_n}\big(V_jW_n \big)\xi_j\xi_n
+\frac{1}{(1+\xi_n^2) }\sum_{ l=1}^{n-1} \partial_{x_n}\big(V_nW_l \big)\xi_j\xi_l  \nonumber\\
&+\frac{-1}{(1+\xi_n^2) }\sum_{j,l=1}^{n-1} \partial_{x_n}\big(V_jW_l \big)\xi_j\xi_l
+\frac{1}{(1+\xi_n^2) } \partial_{x_n}\big(V_nW_n \big)\xi_n\xi_n  \nonumber\\
&+\frac{-1}{(1+\xi_n^2)^2 }\sum_{j =1}^{n-1}  V_jW_n\xi_j\xi_lh'(0)
+\frac{1}{(1+\xi_n^2)^2}\sum_{l=1}^{n-1}  V_nW_l\xi_j\xi_lh'(0) \nonumber\\
&+\frac{-1}{(1+\xi_n^2)^2 }\sum_{j,l=1}^{n-1}  V_jW_l\xi_j\xi_lh'(0)
+\frac{1}{(1+\xi_n^2)^2} V_nW_n \xi_n\xi_nh'(0).
\end{align}
We note that $i<n,~\int_{|\xi'|=1}\xi_{i_{1}}\xi_{i_{2}}\cdots\xi_{i_{2d+1}}\sigma(\xi')=0$,
so we omit some items that have no contribution for computing {\bf case a)~II)}.
By the Cauchy integral formula, we obtain
\begin{align}\label{b28}
&\pi^+_{\xi_n}\Big(\frac{-1}{(1+\xi_n^2) }\sum_{j,l=1}^{n-1} \partial_{x_n}\big(V_jW_l \big)\xi_j\xi_l
+\frac{1}{(1+\xi_n^2) } \partial_{x_n}\big(V_nW_n \big)\xi_n\xi_n  \nonumber\\
 &+\frac{-1}{(1+\xi_n^2)^2 }\sum_{j,l=1}^{n-1}  V_jW_l\xi_j\xi_lh'(0)
+\frac{1}{(1+\xi_n^2)^2} V_nW_n \xi_n\xi_nh'(0)\Big)(x_0)\nonumber\\
 =& \frac{i }{2(\xi_n-i) }\sum_{j,l=1}^{n-1} \partial_{x_n}\big(V_jW_l \big)\xi_j\xi_l
- \frac{i }{2(\xi_n-i) } \partial_{x_n}\big(V_nW_n \big)  \nonumber\\
 &- \frac{2+i\xi_n}{4(\xi_n-i)^2}\sum_{j,l=1}^{n-1}  V_jW_l\xi_j\xi_lh'(0)
-\frac{1}{4(\xi_n-i)^2} V_nW_n  h'(0).
\end{align}
From (3.30) and (3.32), we obtain
\begin{align}
\label{b26}
\Phi_2=&-\frac{1}{2}\int_{|\xi'|=1}\int^{+\infty}_{-\infty}
\mathrm{Tr}[\partial_{x_n}\pi^+_{\xi_n}
\sigma_{0}\Big(\big(\nabla^{S(TM)}_{V}+\frac{3}{4}g(V,X)\big)\big(\nabla^{S(TM)}_{W}+\frac{3}{4}g(W,X)\big)H_{X}^{-1}\Big) \nonumber\\
&\times
\partial_{\xi_n}^2\sigma_{-2}(H_{X}^{-1})](x_0)\mathrm{d}\xi_n\sigma(\xi')\mathrm{d}x'\nonumber\\
=&-\frac{1}{2}\int_{|\xi'|=1}\int^{+\infty}_{-\infty}
\frac{i(3\xi_n^{2}-1) }{ (\xi_n-i)^{4} (\xi_n+i)^{3}}\sum_{j,l=1}^{n-1} \partial_{x_n}\big(V_jW_l \big)\xi_j\xi_l
                                                           \mathrm{Tr}^{S(TM)}(\mathrm{id})\mathrm{d}\xi_n\sigma(\xi')\mathrm{d}x'\nonumber\\
&-\frac{1}{2}\int_{|\xi'|=1}\int^{+\infty}_{-\infty} \frac{-(2+i\xi_n)(3\xi_n^{2}-1) }{ 2(\xi_n-i)^{5} (\xi_n+i)^{3}}
 \sum_{j,l=1}^{n-1}  V_jW_l\xi_j\xi_lh'(0)  \mathrm{Tr}^{S(TM)}(\mathrm{id})\mathrm{d}\xi_n\sigma(\xi')\mathrm{d}x'\nonumber\\
 &-\frac{1}{2}\int_{|\xi'|=1}\int^{+\infty}_{-\infty}\frac{-i(3\xi_n^{2}-1) }{ (\xi_n-i)^{4} (\xi_n+i)^{3}}\partial_{x_n}\big(V_nW_n \big)
                                                    \mathrm{Tr}^{S(TM)}(\mathrm{id})  \mathrm{d}\xi_n\sigma(\xi')\mathrm{d}x'\nonumber\\
  &-\frac{1}{2}\int_{|\xi'|=1}\int^{+\infty}_{-\infty}\frac{-(3\xi_n^{2}-1) }{ 2(\xi_n-i)^{4} (\xi_n+i)^{3}} V_nW_n  h'(0)
                                                  \mathrm{Tr}^{S(TM)} (\mathrm{id})\mathrm{d}\xi_n\sigma(\xi')\mathrm{d}x'\nonumber\\
 =& \Big(-\frac{\pi^{2}}{3}\sum_{j =1}^{n-1} \partial_{x_n}\big(V_jW_j \big)
 -\frac{5}{6}\pi^{2} \sum_{j =1}^{n-1}  V_jW_jh'(0) -2\pi^{2}\partial_{x_n}\big(V_nW_n \big)
 +i\pi^{2} V_nW_n  h'(0) \Big)  \mathrm{d}x'.
\end{align}

  {\bf case a)~III)}~$r=0,~l=-2,~j=|\alpha|=0,~k=1$.

By (3.9), we get
\begin{align}\label{36}
\Phi_3=&-\frac{1}{2}\int_{|\xi'|=1}\int^{+\infty}_{-\infty}
\mathrm{Tr}\Big[\partial_{\xi_n}\pi^+_{\xi_n}
\sigma_{0}\Big(\big(\nabla^{S(TM)}_{V}+\frac{3}{4}g(V,X)\big)\big(\nabla^{S(TM)}_{W}+\frac{3}{4}g(W,X)\big)H_{X}^{-1}\Big) \nonumber\\
&\times\partial_{\xi_n} \partial_{x_n}\sigma_{-2}(H_{X}^{-1})\Big](x_0)\mathrm{d}\xi_n\sigma(\xi')\mathrm{d}x'.
\end{align}
 By Lemma 3.6, we have
\begin{eqnarray}\label{37}
\partial_{x_n}\sigma_{-2}(H_{X}^{-1})(x_0)|_{|\xi'|=1}
=-\frac{h'(0)}{(1+\xi_n^2)^2}.
\end{eqnarray}
An easy calculation gives
\begin{eqnarray}\label{37}
\partial_{\xi_n}\partial_{x_n}\sigma_{-2}(H_{X}^{-1})(x_0)|_{|\xi'|=1}
=-\frac{4\xi_nh'(0)}{(1+\xi_n^2)^3}.
\end{eqnarray}
By the Cauchy integral formula, we obtain
\begin{align}\label{38}
&\pi^+_{\xi_n}\sigma_{0}\Big(\big(\nabla^{S(TM)}_{V}+\frac{3}{4}g(V,X)\big)\big(\nabla^{S(TM)}_{W}
+\frac{3}{4}g(W,X)\big)H_{X}^{-1}\Big)(x_0)|_{|\xi'|=1}\nonumber\\
=&\frac{i}{2(\xi_n-i)}\sum_{j,l=1}^{n-1}V_jW_l\xi_j\xi_l+\frac{1}{2(\xi_n-i)}V_nW_n-\frac{1}{2(\xi_n-i)}\sum_{j=1}^{n-1}V_jW_n\xi_j\nonumber\\
&-\frac{1}{2(\xi_n-i)}\sum_{l=1}^{n-1}V_nW_l\xi_l.
\end{align}
Also, straightforward computations yield
\begin{align}\label{38}
&\partial_{\xi_n}\pi^+_{\xi_n}\sigma_{0}\Big(\big(\nabla^{S(TM)}_{V}+\frac{3}{4}g(V,X)\big)\big(\nabla^{S(TM)}_{W}
+\frac{3}{4}g(W,X)\big)H_{X}^{-1}\Big)(x_0)|_{|\xi'|=1}\nonumber\\
=&\frac{-i}{2(\xi_n-i)^{2}}\sum_{j,l=1}^{n-1}V_jW_l\xi_j\xi_l-\frac{1}{2(\xi_n-i)^{2}}V_nW_n
+\frac{1}{2(\xi_n-i)^{2}}\sum_{j=1}^{n-1}V_jW_n\xi_j\nonumber\\
&+\frac{1}{2(\xi_n-i)^{2}}\sum_{l=1}^{n-1}V_nW_l\xi_l.
\end{align}
Therefore, we get
\begin{align}\label{36}
\Phi_3=&-\frac{1}{2}\int_{|\xi'|=1}\int^{+\infty}_{-\infty}
\mathrm{Tr}\Big[\partial_{\xi_n}\pi^+_{\xi_n}
\sigma_{0}\Big(\big(\nabla^{S(TM)}_{V}+\frac{3}{4}g(V,X)\big)\big(\nabla^{S(TM)}_{W}+\frac{3}{4}g(W,X)\big)H_{X}^{-1}\Big) \nonumber\\
&\times\partial_{\xi_n} \partial_{x_n}\sigma_{-2}(H_{X}^{-1})\Big](x_0)\mathrm{d}\xi_n\sigma(\xi')\mathrm{d}x' \nonumber\\
&=-2\sum_{j,l=1}^{n-1}V_jW_lh'(0)\frac{4\pi}{3} \int_{\Gamma^{+}}\frac{i}{(\xi_n-i)^5(\xi_n+i)^2} d\xi_{n}dx'
+2V_nW_nh'(0)\Omega_3\int_{\Gamma^{+}}\frac{1}{(\xi_n-i)^5(\xi_n+i)^2}d\xi_{n}dx'\nonumber\\
&=-2\sum_{j,l=1}^{n-1}V_jW_lh'(0)\frac{4\pi}{3} \frac{2\pi i}{4!}\left[\frac{i}{(\xi_n+i)^2}\right]^{(4)}\bigg|_{\xi_n=i}dx'
+2V_nW_nh'(0)\Omega_3\frac{2\pi i}{4!}\left[\frac{1}{(\xi_n+i)^2}\right]^{(4)}\bigg|_{\xi_n=i}dx'\nonumber\\
&=\left(\frac{5\pi^{2}}{12}\sum_{j=1}^{n-1}V_jW_j+\frac{5i\pi^{2}}{4}V_nW_n\right)h'(0) dx'.
\end{align}

 {\bf case b)}~$r=0,~l=-3,~k=j=|\alpha|=0$.

 By (3.9), we get
 \begin{align}\label{36}
\Phi_4=&-i\int_{|\xi'|=1}\int^{+\infty}_{-\infty}
\mathrm{Tr}\Big[ \pi^+_{\xi_n}
\sigma_{0}\Big(\big(\nabla^{S(TM)}_{V}+\frac{3}{4}g(V,X)\big)\big(\nabla^{S(TM)}_{W}+\frac{3}{4}g(W,X)\big)H_{X}^{-1}\Big) \nonumber\\
&\times\partial_{\xi_n} \sigma_{-3}(H_{X}^{-1})\Big](x_0)\mathrm{d}\xi_n\sigma(\xi')\mathrm{d}x' \nonumber\\
=&i\int_{|\xi'|=1}\int^{+\infty}_{-\infty}
\mathrm{Tr}\Big[ \partial_{\xi_n} \pi^+_{\xi_n}
\sigma_{0}\Big(\big(\nabla^{S(TM)}_{V}+\frac{3}{4}g(V,X)\big)\big(\nabla^{S(TM)}_{W}+\frac{3}{4}g(W,X)\big)H_{X}^{-1}\Big) \nonumber\\
&\times\sigma_{-3}(H_{X}^{-1})\Big](x_0)\mathrm{d}\xi_n\sigma(\xi')\mathrm{d}x'.
\end{align}
By Lemma 3.4 and Lemma 3.5, we get
\begin{align}\label{45}
&\partial_{\xi_n}\pi^+_{\xi_n}\sigma_{0}\Big(\big(\nabla^{S(TM)}_{V}+\frac{3}{4}g(V,X)\big)
\big(\nabla^{S(TM)}_{W}+\frac{3}{4}g(W,X)\big)H_{X}^{-1}\Big)(x_0)|_{|\xi'|=1}\nonumber\\
=&-\frac{i}{2(\xi_n-i)^2}\sum_{j,l=1}^{n-1}V_jW_l\xi_j\xi_l-\frac{1}{2(\xi_n-i)^2}V_nW_n\nonumber\\
&+\frac{1}{2(\xi_n-i)^2}\sum_{j=1}^{n-1}V_jW_n\xi_j+\frac{1}{2(\xi_n-i)^2}\sum_{l=1}^{n-1}V_nW_l\xi_l.
\end{align}
By Lemma 3.3 and Lemma 3.4, we have
\begin{align}\label{43}
\sigma_{-3}(H_{X}^{-1})(x_0)|_{|\xi'|=1}
=&-\frac{i}{(1+\xi_n^2)^2}\left(-\frac{1}{2}h'(0)\sum_{k<n}\xi_nc(e_k)c(e_n)+\frac{5}{2}h'(0)\xi_n\right)
-\frac{2ih'(0)\xi_n}{(1+\xi_n^2)^3}\nonumber\\
&-\sqrt{-1}|\xi|^{-4} \big (X_{j}-\frac{1}{2}\langle X ,  \partial_{j} \rangle   \big)\xi_j\nonumber\\
=:&\sigma_{-3}(D^{-2})(x_0)|_{|\xi'|=1}-\sum_{j=1}^{n}\sqrt{-1}|\xi|^{-4} \big (X_{j}-\frac{1}{2}\langle X ,  \partial_{j} \rangle   \big)\xi_j.
\end{align}
We note that $i<n,~\int_{|\xi'|=1}\xi_{i_{1}}\xi_{i_{2}}\cdots\xi_{i_{2d+1}}\sigma(\xi')=0$,
and $\sum_{i\neq s\neq t}Tr[A_{ist}c(\widetilde{e_{i}})c(\widetilde{e_{s}})c(\widetilde{e_{t}})c(\xi')]=0$,
so we omit some items that have no contribution for computing {\bf case b)}. Then
\begin{align}\label{41}
&i\int_{|\xi'|=1}\int^{+\infty}_{-\infty}
\mathrm{Tr}\Big[ \partial_{\xi_n} \pi^+_{\xi_n}
\sigma_{0}\Big(\big(\nabla^{S(TM)}_{V}+\frac{3}{4}g(V,X)\big)\big(\nabla^{S(TM)}_{W}+\frac{3}{4}g(W,X)\big)H_{X}^{-1}\Big) \nonumber\\
&\times\sigma_{-3}(D^{-2})\Big](x_0)\mathrm{d}\xi_n\sigma(\xi')\mathrm{d}x'\nonumber\\
=&-i\sum_{j,l=1}^{n-1}V_jW_lh'(0)\frac{4\pi}{3}\int_{\Gamma^{+}}
\frac{5\xi_n^2-5+4\xi_n}{(\xi_n-i)^5(\xi_n+i)^2} d\xi_{n}dx'+iV_nW_nh'(0)\Omega_3
\int_{\Gamma^{+}}\frac{5\xi_n^3-\xi_n}{(\xi_n-i)^5(\xi_n+i)^2}d\xi_{n}dx'\nonumber\\
=&-i\sum_{j,l=1}^{n-1}V_jW_lh'(0)\frac{4\pi}{3}\frac{2\pi i}{4!}\left[\frac{5\xi_n^2-5+4\xi_n}{(\xi_n+i)^2}\right]^{(4)}\bigg|_{\xi_n=i}dx'
+iV_nW_nh'(0)\Omega_3\frac{2\pi i}{4!}\left[\frac{5\xi_n^3-\xi_n}{(\xi_n+i)^2}\right]^{(4)}\bigg|_{\xi_n=i}dx'\nonumber\\
=&\Big(\frac{(1-5i)\pi^{2}}{12}\sum_{j=1}^{n-1}V_jW_j+\frac{11i}{4}\pi^{2}V_nW_n\Big)h'(0) dx'.
\end{align}
On the other hand,
\begin{align}\label{41}
&i\int_{|\xi'|=1}\int^{+\infty}_{-\infty}
\mathrm{Tr}\Big[ \partial_{\xi_n} \pi^+_{\xi_n}
\sigma_{0}\Big(\big(\nabla^{S(TM)}_{V}+\frac{3}{4}g(V,X)\big)\big(\nabla^{S(TM)}_{W}+\frac{3}{4}g(W,X)\big)H_{X}^{-1}\Big) \nonumber\\
&\times\Big(-\sum_{j=1}^{n}\sqrt{-1}|\xi|^{-4} \big (X_{j}-\frac{1}{2}\langle X ,  \partial_{j} \rangle   \big)\xi_j\Big)\Big]
(x_0)\mathrm{d}\xi_n\sigma(\xi')\mathrm{d}x'\nonumber\\
=&i\int_{|\xi'|=1}\int^{+\infty}_{-\infty}
\frac{-i }{ 2(\xi_n-i)^{4} (\xi_n+i)^{2}}\sum_{j,l=1}^{n}\xi_{j}\xi_{l}
\Big(-\sum_{j=1}^{n-1}\sqrt{-1}|\xi|^{-4} \big (X_{j}-\frac{1}{2}\langle X ,  \partial_{j} \rangle   \big)\xi_j\Big)\nonumber\\
&\times V_{j}W_{n}  \mathrm{Tr}^{S(TM)}(\mathrm{id})\mathrm{d}\xi_n\sigma(\xi')\mathrm{d}x'\nonumber\\
&+i\int_{|\xi'|=1}\int^{+\infty}_{-\infty} \frac{-i }{ 2(\xi_n-i)^{4} (\xi_n+i)^{2}}\sum_{j,l=1}^{n}\xi_{j}\xi_{l}
\Big(-\sum_{l=1}^{n-1}\sqrt{-1}|\xi|^{-4} \big (X_{j}-\frac{1}{2}\langle X ,  \partial_{j} \rangle   \big)\xi_j\Big)\nonumber\\
&\times V_{n}W_{l}
 \mathrm{Tr}^{S(TM)}(\mathrm{id})\mathrm{d}\xi_n\sigma(\xi')\mathrm{d}x'\nonumber\\
 &+i\int_{|\xi'|=1}\int^{+\infty}_{-\infty}\frac{-\xi_n }{ 2(\xi_n-i)^{4} (\xi_n+i)^{2}}\sum_{j,l=1}^{n}\xi_{j}\xi_{l}
\Big(- \sqrt{-1}|\xi|^{-4} \big (X_{n}-\frac{1}{2}\langle X ,  \partial_{n} \rangle   \big) \Big)\nonumber\\
&\times \sum_{j =1}^{n-1}V_{j}W_{l} \mathrm{Tr}^{S(TM)}(\mathrm{id})  \mathrm{d}\xi_n\sigma(\xi')\mathrm{d}x'\nonumber\\
  &+i\int_{|\xi'|=1}\int^{+\infty}_{-\infty}\frac{i\xi_n  }{ 2(\xi_n-i)^{4} (\xi_n+i)^{2}}
 \Big(- \sqrt{-1}|\xi|^{-4} \big (X_{n}-\frac{1}{2}\langle X ,  \partial_{n} \rangle   \big) \Big)\nonumber\\
 &\times V_{n}W_{n} \mathrm{Tr}^{S(TM)} (\mathrm{id})\mathrm{d}\xi_n\sigma(\xi')\mathrm{d}x'\nonumber\\
 =& \Big(-\frac{2\pi^{2}}{3}\sum_{j =1}^{n-1} \big (X_{j}-\frac{1}{2}\langle X ,  \partial_{j} \rangle   \big)V_{j}W_{n}
 -\frac{2\pi^{2}}{3}\sum_{l =1}^{n-1} \big (X_{l}-\frac{1}{2}\langle X ,  \partial_{l} \rangle   \big)V_{n}W_{l}\nonumber\\
 &+\frac{ \pi^{2}}{3}\sum_{j  =1}^{n-1}V_{j}W_{j} \big (X_{n}-\frac{1}{2}\langle X ,  \partial_{n} \rangle   \big)
  -i\pi^{2} V_{n}W_{n} \big (X_{n}-\frac{1}{2}\langle X ,  \partial_{n} \rangle   \big)\Big)  \mathrm{d}x'.
\end{align}
Summing up (3.42),(3.43) leads to the result of $\Phi_4$.

 {\bf  case c)}~$r=-1,~\ell=-2,~k=j=|\alpha|=0$.

By (3.9), we get
\begin{align}\label{36}
\Phi_5=&-i\int_{|\xi'|=1}\int^{+\infty}_{-\infty}
\mathrm{Tr}\Big[ \pi^+_{\xi_n}
\sigma_{-1}\Big(\big(\nabla^{S(TM)}_{V}+\frac{3}{4}g(V,X)\big)\big(\nabla^{S(TM)}_{W}+\frac{3}{4}g(W,X)\big)H_{X}^{-1}\Big) \nonumber\\
&\times\partial_{\xi_n} \sigma_{-2}(H_{X}^{-1})\Big](x_0)\mathrm{d}\xi_n\sigma(\xi')\mathrm{d}x'.
\end{align}
By Lemma 3.4, we have
\begin{align}\label{62}
\partial_{\xi_n}\sigma_{-2}(H_{X}^{-1})(x_0)|_{|\xi'|=1}=-\frac{2\xi_n}{(\xi_n^2+1)^2}.
\end{align}
On the other hand,
write
 \begin{eqnarray}
D_x^{\alpha}&=(-i)^{|\alpha|}\partial_x^{\alpha};
~\sigma(D_t)=p_1+p_0;
~ \sigma(D_t)^{-1} =\sum_{j=1}^{\infty}q_{-j}.
\end{eqnarray}
then we have
\begin{align}
1=\sigma(D\circ D^{-1})&=\sum_{\alpha}\frac{1}{\alpha!}\partial^{\alpha}_{\xi}[\sigma(D)]
D_x^{\alpha}[\sigma(D^{-1})]\nonumber\\
&=(p_1+p_0)(q_{-1}+q_{-2}+q_{-3}+\cdots)\nonumber\\
&~~~+\sum_j(\partial_{\xi_j}p_1+\partial_{\xi_j}p_0)(
D_{x_j}q_{-1}+D_{x_j}q_{-2}+D_{x_j}q_{-3}+\cdots)\nonumber\\
&=p_1q_{-1}+(p_1q_{-2}+p_0q_{-1}+\sum_j\partial_{\xi_j}p_1D_{x_j}q_{-1})+\cdots,
\end{align}
then
\begin{equation}
q_{-1}=p_1^{-1};~q_{-2}=-p_1^{-1}[p_0p_1^{-1}+\sum_j\partial_{\xi_j}p_1D_{x_j}(p_1^{-1})].
\end{equation}
By the composition formula of pseudodifferential operators, we obtain
\begin{align}
&\sigma_{-1}\Big(\big(\nabla^{S(TM)}_{V}+\frac{3}{4}g(V,X)\big)\big(\nabla^{S(TM)}_{W}
+\frac{3}{4}g(W,X)\big)H_{X}^{-1}\Big)(x_0)|_{|\xi'|=1}\nonumber\\
=&
\sigma_{2}\Big(\big(\nabla^{S(TM)}_{V}+\frac{3}{4}g(V,X)\big)
\big(\nabla^{S(TM)}_{W}+\frac{3}{4}g(W,X)\big) \Big)\sigma_{-3}(H_{X}^{-1})\nonumber\\
&+\sigma_{1}\Big(\big(\nabla^{S(TM)}_{V}+\frac{3}{4}g(V,X)\big)
\big(\nabla^{S(TM)}_{W}+\frac{3}{4}g(W,X)\big) \Big)\sigma_{-2}(H_{X}^{-1})\nonumber\\
&+\sum_{j=1}^{n}\partial_{\xi_{j}}\Big[\sigma_{2}
\Big(\big(\nabla^{S(TM)}_{V}+\frac{3}{4}g(V,X)\big)\big(\nabla^{S(TM)}_{W}+\frac{3}{4}g(W,X)\big)\Big)\Big]
D_{x_{j}}\big[\sigma_{-2}(H_{X}^{-1})\big]\nonumber\\
=:& B_{1}+B_{2}+B_{3}.
\end{align}

(i) Explicit representation  of $B_{1}$,
\begin{align}
B_{1}=&\sigma_{2}\Big(\big(\nabla^{S(TM)}_{V}+\frac{3}{4}g(V,X)\big)
\big(\nabla^{S(TM)}_{W}+\frac{3}{4}g(W,X)\big) \Big)\sigma_{-3}(H_{X}^{-1})(x_0)|_{|\xi'|=1}\nonumber\\
=&-\sum_{j,l=1}^{n}V_jW_l\xi_j\xi_l\times\Big(-\sqrt{-1}|\xi|^{-4}\xi_k(\Gamma^k-2\delta^k)
-\sqrt{-1}|\xi|^{-6}2\xi^j\xi_\alpha\xi_\beta\partial_jg^{\alpha\beta}\nonumber\\
&-\sum_{j=1}^{n}\sqrt{-1}|\xi|^{-4} \big (X_{j}-\frac{1}{2}\langle X ,  \partial_{j} \rangle   \big)\xi_j\Big)\nonumber\\
=&-\sum_{j,l=1}^{n}V_jW_l\xi_j\xi_l\times\Big( -\frac{i}{(1+\xi_n^2)^2}
\big(-\frac{1}{2}h'(0)\sum_{k<n}\xi_nc(e_k)c(e_n)+\frac{5}{2}h'(0)\xi_n\big)
-\frac{2ih'(0)\xi_n}{(1+\xi_n^2)^3}\nonumber\\
&-\sum_{j=1}^{n}\sqrt{-1}|\xi|^{-4} \big (X_{j}-\frac{1}{2}\langle X ,  \partial_{j} \rangle   \big)\xi_j\Big)\nonumber\\
=&-\sum_{j,l=1}^{n}V_jW_l\xi_j\xi_l\times  \frac{ih'(0)}{2(1+\xi_n^2)^2}
 \sum_{k<n}\xi_nc(e_k)c(e_n)
 -\sum_{j,l=1}^{n}V_jW_l\xi_j\xi_l\times  \frac{ -9i\xi_n-5i\xi_n^{3 } }{2(1+\xi_n^{2})^{3}}h'(0)   \nonumber\\
&-\sum_{j,l=1}^{n}V_jW_l\xi_j\xi_l\times\Big( -\sum_{j=1}^{n}\sqrt{-1}|\xi|^{-4} \big (X_{j}-\frac{1}{2}
\langle X ,  \partial_{j} \rangle   \big)\xi_j\Big)\nonumber\\
=&B_{1}(1)+B_{1}(2)+B_{1}(3).
\end{align}
We note that $\mathrm{Tr}(  \sum_{k<n} c(e_k)c(e_n)  )=0$, then
 we omit  $B_{1}(1)$ which has no contribution for computing {\bf case c)}. By the Cauchy integral formula, we obtain
\begin{align}\label{36}
 &-i\int_{|\xi'|=1}\int^{+\infty}_{-\infty}
\mathrm{Tr}\Big[ \pi^+_{\xi_n}(B_{1}(2)) \times\partial_{\xi_n} \sigma_{-2}(H_{X}^{-1})\Big](x_0)
\mathrm{d}\xi_n\sigma(\xi')\mathrm{d}x'\nonumber\\
=&-i\int_{|\xi'|=1}\int^{+\infty}_{-\infty}
\frac{i\xi_n  }{  2(\xi_n-i)^{5} (\xi_n+i)^{2}} \sum_{j,l=1}^{n-1}V_jW_l\xi_j\xi_lh'(0)
\mathrm{Tr}^{S(TM)}(\mathrm{id})\mathrm{d}\xi_n\sigma(\xi')\mathrm{d}x'\nonumber\\
&-i\int_{|\xi'|=1}\int^{+\infty}_{-\infty}
\frac{-i\xi_n  }{  2(\xi_n-i)^{5} (\xi_n+i)^{2}}  V_nW_n h'(0)
\mathrm{Tr}^{S(TM)}(\mathrm{id})\mathrm{d}\xi_n\sigma(\xi')\mathrm{d}x'\nonumber\\
=&-\frac{1}{4}\pi^{2}\sum_{j =1}^{n-1}V_jW_jh'(0)\mathrm{d}x'+\frac{3}{4}\pi^{2} V_nW_nh'(0)\mathrm{d}x'.
\end{align}
  Also, straightforward computations yield
  \begin{align}
B_{1}(3)=&-\sum_{j,l=1}^{n}V_jW_l\xi_j\xi_l\times\Big( -\sum_{j=1}^{n}\sqrt{-1}|\xi|^{-4} \big (X_{j}-\frac{1}{2}
\langle X ,  \partial_{j} \rangle   \big)\xi_j\Big)\nonumber\\
=&\Big(\sum_{j,l=1}^{n-1}V_jW_l\xi_j\xi_l+\sum_{j =1}^{n-1}V_jW_n\xi_j\xi_n+\sum_{ l=1}^{n-1}V_nW_l\xi_n\xi_l
+ V_nW_n\xi_n\xi_n\Big)\nonumber\\
&\times\sqrt{-1}|\xi|^{-4}\Big( \sum_{k=1}^{n-1} \big (X_{k}-\frac{1}{2}\langle X ,  \partial_{k} \rangle  \big)\xi_k
+  \big (X_{n}-\frac{1}{2} \langle X ,  \partial_{n} \rangle  \big)\xi_n\Big)\nonumber\\
=&\sqrt{-1}|\xi|^{-4}
\Big(\sum_{j,l=1}^{n-1}V_jW_l\xi_j\xi_l \big (X_{k}-\frac{1}{2}\langle X ,  \partial_{k} \rangle  \big)\xi_k
  +\sum_{j,l=1}^{n-1}V_jW_l\xi_j\xi_l \big (X_{n}-\frac{1}{2} \langle X ,  \partial_{n} \rangle  \big)\xi_n\nonumber\\
  &+\sum_{j,k =1}^{n-1}V_jW_n\xi_j\xi_n \big (X_{k}-\frac{1}{2}\langle X ,  \partial_{k} \rangle  \big)\xi_k
  +\sum_{j =1}^{n-1}V_jW_n\xi_j\xi_n\big (X_{n}-\frac{1}{2} \langle X ,  \partial_{n} \rangle  \big)\xi_n\nonumber\\
    &+\sum_{ k,l=1}^{n-1}V_nW_l\xi_n\xi_l\big (X_{k}-\frac{1}{2}\langle X ,  \partial_{k} \rangle  \big)\xi_k
  +\sum_{ l=1}^{n-1}V_nW_l\xi_n\xi_l\big (X_{n}-\frac{1}{2} \langle X ,  \partial_{n} \rangle  \big)\xi_n\nonumber\\
    &+V_nW_n\xi_n\xi_n\big (X_{k}-\frac{1}{2}\langle X ,  \partial_{k} \rangle  \big)\xi_k
  +V_nW_n\xi_n\xi_n\big (X_{n}-\frac{1}{2} \langle X ,  \partial_{n} \rangle  \big)\xi_n
\Big).
\end{align}
We note that $i<n,~\int_{|\xi'|=1}\xi_{i_{1}}\xi_{i_{2}}\cdots\xi_{i_{2d+1}}\sigma(\xi')=0$,
so we omit some items of $B_{1}(3)$ that have no contribution for computing {\bf case c)}.
By the Cauchy integral formula, we obtain
\begin{align}\label{36}
 &-i\int_{|\xi'|=1}\int^{+\infty}_{-\infty}
\mathrm{Tr}\Big[ \pi^+_{\xi_n}(B_{1}(3)) \times\partial_{\xi_n} \sigma_{-2}(H_{X}^{-1})\Big](x_0)
\mathrm{d}\xi_n\sigma(\xi')\mathrm{d}x'\nonumber\\
=&-i\int_{|\xi'|=1}\int^{+\infty}_{-\infty}
\frac{-\xi_n  }{  2(\xi_n-i)^{4} (\xi_n+i)^{2}}
\sum_{j,l=1}^{n-1}V_jW_l\xi_j\xi_l \big (X_{n}-\frac{1}{2} \langle X ,  \partial_{n} \rangle  \big)\xi_n
\mathrm{Tr}^{S(TM)}(\mathrm{id})\mathrm{d}\xi_n\sigma(\xi')\mathrm{d}x'\nonumber\\
&-i\int_{|\xi'|=1}\int^{+\infty}_{-\infty}
\frac{-\xi_n  }{  2(\xi_n-i)^{4} (\xi_n+i)^{2}}
\sum_{j,k =1}^{n-1}V_jW_n\xi_j\xi_n \big (X_{k}-\frac{1}{2}\langle X ,  \partial_{k} \rangle  \big)\xi_k
\mathrm{Tr}^{S(TM)}(\mathrm{id})\mathrm{d}\xi_n\sigma(\xi')\mathrm{d}x'\nonumber\\
&-i\int_{|\xi'|=1}\int^{+\infty}_{-\infty}
\frac{-\xi_n  }{  2(\xi_n-i)^{4} (\xi_n+i)^{2}}
\sum_{ k,l=1}^{n-1}V_nW_l\xi_n\xi_l\big (X_{k}-\frac{1}{2}\langle X ,  \partial_{k} \rangle  \big)\xi_k
\mathrm{Tr}^{S(TM)}(\mathrm{id})\mathrm{d}\xi_n\sigma(\xi')\mathrm{d}x'\nonumber\\
&-i\int_{|\xi'|=1}\int^{+\infty}_{-\infty}
\frac{-\xi_n  }{  2(\xi_n-i)^{4} (\xi_n+i)^{2}}
V_nW_n\xi_n\xi_n\big (X_{n}-\frac{1}{2} \langle X ,  \partial_{n} \rangle  \big)\xi_n
\mathrm{Tr}^{S(TM)}(\mathrm{id})\mathrm{d}\xi_n\sigma(\xi')\mathrm{d}x'\nonumber\\
=&-\frac{1}{3}\pi^{2}\sum_{j =1}^{n-1}V_jW_j\big (X_{n}-\frac{1}{2} \langle X ,  \partial_{n} \rangle  \big)     \mathrm{d}x'
 -\frac{1}{3}\pi^{2}\sum_{j =1}^{n-1}V_nW_j\big (X_{j}-\frac{1}{2} \langle X ,  \partial_{j} \rangle  \big)\mathrm{d}x' \nonumber\\
 &-\frac{1}{3}\pi^{2}\sum_{j =1}^{n-1}V_jW_n\big (X_{j}-\frac{1}{2} \langle X ,  \partial_{j} \rangle  \big)  \mathrm{d}x'
 -\pi^{2}V_nW_n \big (X_{n}-\frac{1}{2} \langle X ,  \partial_{n} \rangle  \big) \mathrm{d}x'.
 \end{align}

(ii) Explicit representation  of $B_{ 2}$, let $A(V)=\frac{1}{4}\sum_{i,j}\langle \nabla^{L}_{V}e_{i}, e_{j}  \rangle c(e_{i}) c(e_{j}) $,
  $A(W)=\frac{1}{4}\sum_{i,j}\langle \nabla^{L}_{W}e_{i}, e_{j}  \rangle c(e_{i})  c(e_{j}) $.
\begin{align}
B_{2}(x_0)=&\sigma_{1}\Big(\big(\nabla^{S(TM)}_{V}+\frac{3}{4}g(V,X)\big)
\big(\nabla^{S(TM)}_{W}+\frac{3}{4}g(W,X)\big) \Big)\sigma_{-2}(H_{X}^{-1})(x_0)|_{|\xi'|=1}\nonumber\\
=&\Big(\sum_{j,l=1}^n V_j\frac{\partial W_l }{\partial x_j }\sqrt{-1}\xi_l
+\sqrt{-1}\sum_{j =1}^nA(W)V_j\xi_j+\sum_{l =1}^n\sqrt{-1}A(V)W_l\xi_l\nonumber\\
&+\sum_{l =1}^n\frac{3}{4}g(V,X)W_l\sqrt{-1}\xi_l+\sum_{j =1}^n \frac{3}{4}g(W,X)V_j\sqrt{-1} \xi_j\Big)\times|\xi|^{-2}\nonumber\\
=&\Big(\sum_{j,l=1}^{n-1} V_j\frac{\partial W_l }{\partial x_j }\sqrt{-1}\xi_l
+\sqrt{-1}\sum_{j =1}^{n-1}A(W)V_j\xi_j+\sum_{l =1} ^{n-1}\sqrt{-1}A(V)W_l\xi_l\nonumber\\
&+\sum_{l =1}^{n-1}\frac{3}{4}g(V,X)W_l\sqrt{-1}\xi_l+\sum_{j =1}^{n-1} \frac{3}{4}g(W,X)V_j\sqrt{-1} \xi_j\Big)\times|\xi|^{-2}\nonumber\\
&+\Big(  V_n\frac{\partial W_n }{\partial x_n }\sqrt{-1}\xi_n
+\sqrt{-1}A(V)W_n\xi_n+ \sqrt{-1}A(W)V_n\xi_n\nonumber\\
&+ \frac{3}{4}g(V,X)W_n\sqrt{-1}\xi_n+  \frac{3}{4}g(W,X)V_n\sqrt{-1} \xi_n\Big)\times|\xi|^{-2}.
\end{align}
By the Cauchy integral formula, we obtain
\begin{align}
\pi^+_{\xi_n}(B_{2})(x_0)
=&\frac{-i}{2(\xi_n-i)}\Big(\sum_{j,l=1}^{n-1} V_j\frac{\partial W_l }{\partial x_j }\sqrt{-1}\xi_l
+\sqrt{-1}\sum_{j =1}^{n-1}A(W)V_j\xi_j+\sum_{l =1} ^{n-1}\sqrt{-1}A(V)W_l\xi_l\nonumber\\
&+\sum_{l =1}^{n-1}\frac{3}{4}g(V,X)W_l\sqrt{-1}\xi_l+\sum_{j =1}^{n-1} \frac{3}{4}g(W,X)V_j\sqrt{-1} \xi_j\Big) \nonumber\\
&+\frac{1}{2(\xi_n-i)}\Big(  V_n\frac{\partial W_n }{\partial x_n }\sqrt{-1}
+\sqrt{-1}A(V)W_n + \sqrt{-1}A(W)V_n \nonumber\\
&+ \frac{3}{4}g(V,X)W_n\sqrt{-1} +  \frac{3}{4}g(W,X)V_n\sqrt{-1}  \Big) \nonumber\\
=&\frac{-i}{2(\xi_n-i)}\Big(\sum_{j,l=1}^{n-1} V_j\frac{\partial W_l }{\partial x_j }\sqrt{-1}\xi_l
+\sqrt{-1}\sum_{j =1}^{n-1}A(W)V_j\xi_j+\sum_{l =1} ^{n-1}\sqrt{-1}A(V)W_l\xi_l\nonumber\\
&+\sum_{l =1}^{n-1}\frac{3}{4}g(V,X)W_l\sqrt{-1}\xi_l+\sum_{j =1}^{n-1} \frac{3}{4}g(W,X)V_j\sqrt{-1} \xi_j\Big) \nonumber\\
&+\frac{1}{2(\xi_n-i)}\Big( V_n\frac{\partial W_n }{\partial x_n }\sqrt{-1}
 + \frac{3}{4}g(V,X)W_n\sqrt{-1} +  \frac{3}{4}g(W,X)V_n\sqrt{-1}  \Big)\nonumber\\
 &+\frac{i}{8(\xi_n-i)}\Big(  \sum_{i,j}\langle \nabla^{L}_{V}e_{i}, e_{j}  \rangle c(e_{i})  c(e_{j})W_n
 + \sum_{i,j}\langle \nabla^{L}_{W}e_{i}, e_{j}  \rangle c(e_{i})  c(e_{j})V_n \Big).
\end{align}
Then we obtain
\begin{align}\label{36}
 &-i\int_{|\xi'|=1}\int^{+\infty}_{-\infty}
\mathrm{Tr}\Big[ \pi^+_{\xi_n}(B_{2}) \times\partial_{\xi_n} \sigma_{-2}(H_{X}^{-1})\Big](x_0)\mathrm{d}\xi_n\sigma(\xi')\mathrm{d}x'\nonumber\\
=&-i\int_{|\xi'|=1}\int^{+\infty}_{-\infty}
\frac{-i\xi_n  }{  (\xi_n-i)^{3} (\xi_n+i)^{2}} \Big( V_n\frac{\partial W_n }{\partial x_n }\sqrt{-1}
 + \frac{3}{4}g(V,X)W_n\sqrt{-1} +  \frac{3}{4}g(W,X)V_n\sqrt{-1}  \Big)\nonumber\\
&\times\mathrm{Tr}^{S(TM)}(\mathrm{id})\mathrm{d}\xi_n\sigma(\xi')\mathrm{d}x'\nonumber\\
&-i\int_{|\xi'|=1}\int^{+\infty}_{-\infty}
\frac{-i\xi_n  }{4(\xi_n-i)^{3} (\xi_n+i)^{2}} \mathrm{Tr}\Big(  \sum_{i,j}\langle \nabla^{L}_{V}e_{i}, e_{j}  \rangle c(e_{i})  c(e_{j})W_n
  \Big)
\mathrm{Tr}^{S(TM)}(\mathrm{id})\mathrm{d}\xi_n\sigma(\xi')\mathrm{d}x'\nonumber\\
&-i\int_{|\xi'|=1}\int^{+\infty}_{-\infty}
\frac{-i\xi_n  }{4(\xi_n-i)^{3}} \mathrm{Tr}\Big(   \sum_{i,j}\langle \nabla^{L}_{W}e_{i}, e_{j}  \rangle c(e_{i})  c(e_{j})V_n \Big)
\mathrm{Tr}^{S(TM)}(\mathrm{id})\mathrm{d}\xi_n\sigma(\xi')\mathrm{d}x' \nonumber\\
=& 2i\pi^{2}\Big( iV_n\frac{\partial W_n }{\partial x_n } + \frac{3}{4}\sqrt{-1}g(V,X)W_n +\frac{3}{4}\sqrt{-1}g(W,X)V_n  \Big) \mathrm{d}x'
+\frac{1}{2}\pi^{2}V_{n} \sum_{ j}\langle \nabla^{L}_{W}e_{j}, e_{j}  \rangle \mathrm{d}x'\nonumber\\
&+\frac{1}{2}\pi^{2}W_{n} \sum_{ j}\langle \nabla^{L}_{V}e_{j}, e_{j}  \rangle \mathrm{d}x'.
\end{align}

(iii) Explicit representation  of $B_{3}$,
\begin{align}
B_{3}(x_0)=&\sum_{j=1}^{n}\sum_{\alpha}\frac{1}{\alpha!}\partial^{\alpha}_{\xi}\Big[\sigma_{2}\Big(\big(\nabla^{S(TM)}_{V}+\frac{3}{4}g(V,X)\big)
\big(\nabla^{S(TM)}_{W}+\frac{3}{4}g(W,X)\big) \Big) \Big]
D_x^{\alpha}\big[\sigma_{-2}(H_{X}^{-1})\big](x_0)|_{|\xi'|=1}\nonumber\\
=&\sum_{j=1}^{n}\partial_{\xi_{j}}\Big[\sigma_{2}\Big(\big(\nabla^{S(TM)}_{V}+\frac{3}{4}g(V,X)\big)
\big(\nabla^{S(TM)}_{W}+\frac{3}{4}g(W,X)\big) \Big)\Big]
(-\sqrt{-1})\partial_{x_{j}}\big[\sigma_{-2}(H_{X}^{-1})\big]\nonumber\\
=&\sum_{j=1}^{n}\partial_{\xi_{j}}\big[-\sum_{j,l=1}^nV_jW_l\xi_j\xi_l\big]
(-\sqrt{-1})\partial_{x_{j}}\big[|\xi|^{-2}\big]\nonumber\\
=&\Big[\sum_{j=1}^{n-1} V_jW_{n}\xi_{j}
+\sum_{l=1}^{n}V_nW_{l}\xi_{l}
+2V_nW_{n}\xi_{n}\Big]\frac{-\sqrt{-1}h'(0)}{|\xi|^{4}} .
\end{align}
  Also, straightforward computations yield
\begin{align}\label{36}
 &-i\int_{|\xi'|=1}\int^{+\infty}_{-\infty}
\mathrm{Tr}\Big[ \pi^+_{\xi_n}(B_{3}) \times\partial_{\xi_n} \sigma_{-2}(H_{X}^{-1})\Big](x_0)\mathrm{d}\xi_n\sigma(\xi')\mathrm{d}x'\nonumber\\
=&-i\int_{|\xi'|=1}\int^{+\infty}_{-\infty}
\frac{-i\xi_n(2+i\xi_n) }{ 2(\xi_n-i)^{4} (\xi_n+i)^{2}}h'(0)\Big(\sum_{j =1}^{n}V_{j}W_{n} \xi_{j}+\sum_{l =1}^{n}V_{n}W_{l} \xi_{l}\Big)
\mathrm{Tr}^{S(TM)}(\mathrm{id})\mathrm{d}\xi_n\sigma(\xi')\mathrm{d}x'\nonumber\\
&-i\int_{|\xi'|=1}\int^{+\infty}_{-\infty}
\frac{ \xi_n  }{  (\xi_n-i)^{4} (\xi_n+i)^{2}}h'(0)V_{n}W_{n}
\mathrm{Tr}^{S(TM)}(\mathrm{id})\mathrm{d}\xi_n\sigma(\xi')\mathrm{d}x'\nonumber\\
=&2\pi^{2}V_{n}W_{n}h'(0)\mathrm{d}x'.
\end{align}
Summing up (3.52),(3.54),(3.57),(3.59) leads to the result of $\Phi_5$.

Let $X=X^T+X_n\partial_n,~Y=Y^T+Y_n\partial_n,$ then we have $\sum_{j=1}^{n-1}X_jY_j(x_{0})=g(X^T,Y^T).$
Now $\Phi$ is the sum of the cases (a), (b) and (c). Therefore, we get
\begin{align}\label{795}
\Phi=&\sum_{i=1}^5\Phi_i
= \Big[-\frac{1}{3} \pi^{2}\partial_{ x_n} (g(V^T,W^T) ) -2\pi^{2}\partial_{ x_n} (V_{n}W_{n})
-\frac{7+5i}{12}\pi^{2}h'(0)g(V^T,W^T)\nonumber\\
&- \pi^{2}\Big(\sum_{j =1}^{n-1} \big (X_{j}-\frac{1}{2}\langle X ,  \partial_{j} \rangle   \big)V_{j}W_{n}+
               \sum_{l =1}^{n-1} \big (X_{l}-\frac{1}{2}\langle X ,  \partial_{j} \rangle   \big)V_{n}W_{l}\Big)\nonumber\\
&+2i\pi^{2}\Big( iV_n\frac{\partial W_n }{\partial x_n } + \frac{3}{4}\sqrt{-1}g(V,X)W_n +\frac{3}{4}\sqrt{-1}g(W,X)V_n  \Big) \nonumber\\
&+\frac{11+20i}{4}\pi^{2}V_{n}W_{n}h'(0)
-(1+i)\pi^{2}V_{n}W_{n} \big (X_{n}-\frac{1}{2}\langle X ,  \partial_{n} \rangle   \big)
\nonumber\\
&+\frac{1}{2}\pi^{2}V_{n} \sum_{ j}\langle \nabla^{L}_{W}e_{j}, e_{j}  \rangle
+\frac{1}{2}\pi^{2}W_{n} \sum_{ j}\langle \nabla^{L}_{V}e_{j}, e_{j}  \rangle \Big]\mathrm{d}x'\nonumber\\
=& \Big[-\frac{1}{3} \pi^{2}\partial_{ x_n} (g(V^T,W^T) )
-\frac{7+5i}{12}\pi^{2}h'(0)g(V^T,W^T)\nonumber\\
&-2\pi^{2}\partial_{ x_n} (V_{n}W_{n})- \pi^{2}\Big(g(X^T,V^T)W_{n}+g(X^T,W^T)V_{n}\Big)\nonumber\\
&+2i\pi^{2}\Big( iV_n\frac{\partial W_n }{\partial x_n } + \frac{3}{4}\sqrt{-1}g(V,X)W_n +\frac{3}{4}\sqrt{-1}g(W,X)V_n  \Big) \nonumber\\
&+\frac{11+20i}{4}\pi^{2}V_{n}W_{n}h'(0)
-(1+i)\pi^{2}V_{n}W_{n} X_{n} \Big]\mathrm{d}x',
\end{align}
where in the lase equality we have used
$\langle \nabla^{L}_{V}e_{j}, e_{j}  \rangle+\langle e_{j}, \nabla^{L}_{V}e_{j}  \rangle=V(e_{j},e_{j})=0$,
, $\langle \nabla^{L}_{V}e_{j}, e_{j}  \rangle=0 $, and
\begin{align}
\sum_{j =1}^{n-1} \big (X_{j}-\frac{1}{2}\langle X ,  \partial_{j} \rangle   \big)V_{j}(x_{0})
=&\sum_{j =1}^{n-1} \big (X_{j}-\frac{1}{2}\langle X , e_{j} \rangle   \big)V_{j}(x_{0})
=\sum_{j =1}^{n-1} \big (X_{j}-\frac{1}{2}X_{j}   \big)V_{j}(x_{0})\nonumber\\
=&\frac{1}{2}\sum_{j =1}^{n-1} X_{j}V_{j}=\frac{1}{2}g(X^{T},V^{T}).
\end{align}
Then we obtain
\begin{thm}\label{thmb1}
 Let $M$ be a 4-dimensional compact spin manifold with boundary $\partial M$,
 the spectral Einstein functionals for the equivariant Bismut Laplacian $H_{X}$ as follows:
\begin{align}
&\widetilde{Wres}\Big[\pi^{+} \big(\nabla^{S(TM)}_{V}
+\frac{3}{4}g(V,X)\big)\big(\nabla^{S(TM)}_{W}+\frac{3}{4}g(W,X)\big)H_{X} ^{-1}
\circ\pi^{+}H_{X}^{-1}\Big]\nonumber\\
=&\frac{4\pi^{2}}{3} \int_{M}\big(Ric(V,W)-\frac{1}{2}sg(V,W)\big) vol_{g}
+3\pi^{2}\int_{M} \Big(g\big(\nabla_{V}^{TM}X,W\big)-g\big(\nabla_{W}^{TM}X,V\big) \Big)vol_{g}\nonumber\\
&+ \int_{M}\Big(\frac{1}{2}s^{g}+ \mathrm{div}^{g}(X)+ |X|^{2}
-2Tr\big( \mu^{S(TM)}(X)\big) \Big)g(V,W) vol_{g} \nonumber\\
&+\int_{\partial M}\Big[-\frac{1}{3} \pi^{2}\partial_{ x_n} (g(V^T,W^T) )
-\frac{7+5i}{12}\pi^{2}h'(0)g(V^T,W^T)\nonumber\\
&-2\pi^{2}\partial_{ x_n} (V_{n}W_{n})- \pi^{2}\Big(g(X^T,V^T)W_{n}+g(X^T,W^T)V_{n}\Big)\nonumber\\
&-2\pi^{2}\Big(  V_n\frac{\partial W_n }{\partial x_n } + \frac{3}{4} g(V,X)W_n +\frac{3}{4} g(W,X)V_n  \Big) \nonumber\\
&+\frac{11+20i}{4}\pi^{2}V_{n}W_{n}h'(0)
-(1+i)\pi^{2}V_{n}W_{n} X_{n} \Big]\mathrm{d}x'.
\end{align}
where $\mathrm{div}^{g}(X)$ denotes the divergence of $X$, and $s$ denotes the scalar curvature.
\end{thm}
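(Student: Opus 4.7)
The proof splits naturally into an interior contribution and a boundary contribution, matching the two terms in formula (3.8). The interior part is handled by directly invoking Theorem 2.4 with $n=4$, $m=2$: for the unbounded symbol $\sigma_{-4}$ of $(\nabla^{S(TM)}_V+\tfrac34 g(V,X))(\nabla^{S(TM)}_W+\tfrac34 g(W,X)) H_X^{-2}$ the interior integrand is identical to what is computed in dimension $4$ without boundary, so $\int_M \int_{|\xi|=1}\mathrm{Tr}_{S(TM)}[\sigma_{-4}(\cdot)]\sigma(\xi)\,dx$ reproduces exactly the three volume integrals on the right-hand side of (3.10). The remaining work, and the bulk of the proof, is the explicit evaluation of $\int_{\partial M} \Phi$.

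For the boundary term, I would fix $x_0 \in \partial M$ and work in the normal coordinates described after Lemma 3.5, using the trivialization $\{[\sigma,f_i]\}$ and the product metric form (3.5). The homogeneity constraint $r+\ell-k-j-|\alpha|=-3$ with $r\le 0$, $\ell\le -2$ produces the five cases already labelled (a)I, (a)II, (a)III, (b), (c). In each case one substitutes the symbol expansions from Lemma 3.3 (Dirac), Lemma 3.4 (Bismut, which differs from Dirac only through the $X$-correction in $\sigma_{-3}(H_X^{-2})$) and Lemma 3.5 (the symbols of the first-order operator $\widetilde\nabla_V\widetilde\nabla_W$, whose $\sigma_0$, $\sigma_1$, $\sigma_2$ pieces carry the dependence on $X$ through the $\tfrac34 g(\cdot,X)$ correction), together with the boundary derivative identities of Lemma 3.6 for $\partial_{x_j}|\xi|^2$ and $\partial_{x_j} c(\xi)$.

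The projection $\pi^+_{\xi_n}$ is evaluated by the Cauchy integral formula and the resulting $\xi_n$-integrals are computed by residues at $\xi_n=i$; the $\xi'$-integrals over $|\xi'|=1$ are reduced using $\int_{|\xi'|=1}\xi_{i_1}\cdots\xi_{i_{2d+1}}\sigma(\xi')=0$ and the standard even-moment formulas on $S^{n-2}$. Trace identities such as $\mathrm{Tr}(c(e_k)c(e_n))=0$ and $\mathrm{Tr}(c(e_i)c(e_s)c(e_t)c(\xi'))=0$ for distinct $i,s,t\ne n$ eliminate most Clifford cross-terms; the only surviving connection pieces are those proportional to $\sum_j \langle \nabla^L_V e_j, e_j\rangle = 0$ (which vanishes by metric compatibility), so the connection one-form $\sigma_i$ enters only through $\sigma_{-3}(D^{-2})$. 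After summing the contributions of cases (a)I, (a)II, (a)III, (b), (c), I would rewrite the boundary integrand in intrinsic form by introducing the splitting $V=V^T+V_n\partial_n$, $W=W^T+W_n\partial_n$, $X=X^T+X_n\partial_n$, using $\sum_{j<n} X_j V_j(x_0)=g(X^T,V^T)$ and the simplification $X_j-\tfrac12\langle X,\partial_j\rangle=\tfrac12 X_j$ at $x_0$ for $j<n$ that already appeared in (3.61).

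The main obstacle is the sheer bookkeeping of case (c), where $\sigma_{-1}$ of the composite operator decomposes into the three pieces $B_1, B_2, B_3$ via the composition formula for pseudodifferential operators; each of $B_1, B_2, B_3$ has to be expanded, projected by $\pi^+_{\xi_n}$, integrated against $\partial_{\xi_n}\sigma_{-2}(H_X^{-1})$, and then the surviving Clifford traces carefully identified. In particular, one must correctly carry the new $X$-dependent contribution $-\sqrt{-1}|\xi|^{-4}(X_j-\tfrac12\langle X,\partial_j\rangle)\xi_j$ in $\sigma_{-3}(H_X^{-2})$ through cases (b) and (c), where it produces the $g(X^T,V^T)W_n$, $g(X^T,W^T)V_n$, and $V_n W_n X_n$ terms that finally appear in (3.62). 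Once every case has been summed and rearranged, the claim follows by combining the volume part of Theorem 2.4 with the boundary one-form $\Phi\,dx'$ collected above.
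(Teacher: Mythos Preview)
Your proposal is correct and follows essentially the same route as the paper: the interior term is read off from Theorem 2.4 with $n=4$, $m=2$, and the boundary term $\int_{\partial M}\Phi$ is evaluated by the same five cases (a)I, (a)II, (a)III, (b), (c), using Lemmas 3.3--3.6, the Cauchy integral and residue computations in $\xi_n$, the odd-moment vanishing on $|\xi'|=1$, the Clifford trace identities, and in case (c) the decomposition $\sigma_{-1}=B_1+B_2+B_3$, with the $X$-correction in $\sigma_{-3}$ producing exactly the tangential/normal $X$-terms in the final boundary integrand.
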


\subsection{Spectral Einstein functionals of the Bismut Laplacian for three dimensional manifolds with boundary}

In this section, we compute the noncommutative residue
 for the equivariant Bismut Laplacian $H_{X}$
  on $3$-dimensional oriented compact manifolds $M$ with boundary $\partial M$.
  As in \cite{Wa1}, for an odd-dimensional manifolds with boundary, we have the formula
\begin{align}
\widetilde{Wres}\Big[\pi^{+} \big(\nabla^{S(TM)}_{V}
+\frac{3}{4}g(V,X)\big)\big(\nabla^{S(TM)}_{W}+\frac{3}{4}g(W,X)\big)H_{X} ^{-1}
\circ\pi^{+}H_{X}^{-1}\Big] =\int_{\partial M}\widetilde{\Phi}.
\end{align}
By (3.9),  when $r+l-k-|\alpha|-j-1=-3$,
so we get ~$r=0,~\ell=-2,~k=j=|\alpha|=0$, then
\begin{align}\label{36}
\widetilde{\Phi} =&-i\int_{|\xi'|=1}\int^{+\infty}_{-\infty}
\mathrm{Tr}\Big[ \pi^+_{\xi_n}
\sigma_{0}\Big(\big(\nabla^{S(TM)}_{V}+\frac{3}{4}g(V,X)\big)\big(\nabla^{S(TM)}_{W}+\frac{3}{4}g(W,X)\big)H_{X}^{-1}\Big) \nonumber\\
&\times\partial_{\xi_n} \sigma_{-2}(H_{X}^{-1})\Big](x_0)\mathrm{d}\xi_n\sigma(\xi')\mathrm{d}x'.
\end{align}
By Lemma 3.4, we have
\begin{align}\label{62}
\partial_{\xi_n}\sigma_{-2}(H_{X}^{-1})(x_0)|_{|\xi'|=1}=-\frac{2\xi_n}{(\xi_n^2+1)^2}.
\end{align}
On the other hand, by the Cauchy integral formula, we obtain
\begin{align}\label{38}
&\pi^+_{\xi_n}\sigma_{0}\Big(\big(\nabla^{S(TM)}_{V}+\frac{3}{4}g(V,X)\big)\big(\nabla^{S(TM)}_{W}
+\frac{3}{4}g(W,X)\big)H_{X}^{-1}\Big)(x_0)|_{|\xi'|=1}\nonumber\\
=&\pi^+_{\xi_n}\Big[\sigma_{2}\Big(\big(\nabla^{S(TM)}_{V}+\frac{3}{4}g(V,X)\big)\big(\nabla^{S(TM)}_{W}
+\frac{3}{4}g(W,X)\big)\Big)\sigma_{-2}(H_{X}^{-1})\Big](x_0)|_{|\xi'|=1}\nonumber\\
=&\frac{i}{2(\xi_n-i)}\sum_{j,l=1}^{n-1}V_jW_l\xi_j\xi_l+\frac{1}{2(\xi_n-i)}V_nW_n-\frac{1}{2(\xi_n-i)}\sum_{j=1}^{n-1}V_jW_n\xi_j\nonumber\\
&-\frac{1}{2(\xi_n-i)}\sum_{l=1}^{n-1}V_nW_l\xi_l.
\end{align}
We note that $i<n,~\int_{|\xi'|=1}\xi_{i_{1}}\xi_{i_{2}}\cdots\xi_{i_{2d+1}}\sigma(\xi')=0$,
so we omit some items that have no contribution for computing $\widetilde{\Phi}$.
Then we obtain
\begin{align}\label{36}
\widetilde{\Phi} =&-i\int_{|\xi'|=1}\int^{+\infty}_{-\infty}
\frac{-i\xi_n  }{   (\xi_n-i)^{3} (\xi_n+i)^{2}} \sum_{j,l=1}^{n-1}V_jW_l\xi_j\xi_lh'(0)
\mathrm{Tr}^{S(TM)}(\mathrm{id})\mathrm{d}\xi_n\sigma(\xi')\mathrm{d}x'\nonumber\\
&-i\int_{|\xi'|=1}\int^{+\infty}_{-\infty}
\frac{ i\xi_n  }{   (\xi_n-i)^{3} (\xi_n+i)^{2}}  V_nW_n h'(0)
\mathrm{Tr}^{S(TM)}(\mathrm{id})\mathrm{d}\xi_n\sigma(\xi')\mathrm{d}x'\nonumber\\
=& \Big(\frac{i\pi}{4}g(V^T,W^T) -\frac{i}{2}\pi V_nW_n \Big)\mathrm{d}x'.
\end{align}
Therefore, we get the following theorem
\begin{thm}\label{thmb1}
 Let $M$ be a 3-dimensional compact spin manifold with boundary $\partial M$ ,
  then we get spectral   functionals for the equivariant Bismut Laplacian $H_{X}$,
\begin{align}
&\widetilde{Wres}\Big[\pi^{+} \big(\nabla^{S(TM)}_{V}
+\frac{3}{4}g(V,X)\big)\big(\nabla^{S(TM)}_{W}+\frac{3}{4}g(W,X)\big)H_{X} ^{-1}
\circ\pi^{+}H_{X}^{-1}\Big] \nonumber\\
=& \int_{\partial M}  \Big(\frac{i\pi}{4}g(V^T,W^T) -\frac{i}{2}\pi V_nW_n \Big)\mathrm{d}x'.
\end{align}
\end{thm}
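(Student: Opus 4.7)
The plan is to invoke the formula from \cite{Wa1} that on an odd-dimensional manifold with boundary the interior part of (3.8) vanishes, so the full residue reduces to the pure boundary integral $\int_{\partial M}\widetilde{\Phi}$ recorded in (3.62); the task then becomes the computation of $\widetilde{\Phi}$ at a single boundary point. With $n=3$, $r\le 0$, $\ell\le-2$, and $j,k,|\alpha|\ge 0$, the summation condition $r+\ell-k-|\alpha|-j-1=-3$ from (3.9) admits the unique solution $r=0,\;\ell=-2,\;j=k=|\alpha|=0$. Consequently $\widetilde{\Phi}$ collapses to the single summand displayed in (3.63), and no case analysis of the kind required in the four-dimensional theorem is necessary.

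Next I would compute the two symbols at a fixed boundary point $x_0$ with $|\xi'|=1$. From Lemma~3.4 one obtains $\partial_{\xi_n}\sigma_{-2}(H_X^{-1})(x_0)=-2\xi_n/(\xi_n^2+1)^2$. The symbol $\sigma_0\bigl(\widetilde{\nabla}_V\widetilde{\nabla}_W H_X^{-1}\bigr)$ reduces, since $H_X^{-1}$ has no symbols of order $0$ or $-1$, to the leading product $\sigma_2(\widetilde{\nabla}_V\widetilde{\nabla}_W)\cdot\sigma_{-2}(H_X^{-1})=-V_jW_l\xi_j\xi_l\,|\xi|^{-2}$; in particular no contribution from $\sigma_{-3}(H_X^{-1})$ or from the subleading symbols $\sigma_1$ and $\sigma_0$ of the composite first-order operator can enter. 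Splitting the double sum according to whether each index lies in $\{1,\ldots,n-1\}$ or equals $n$, the Cauchy integral formula in $\xi_n$ then yields the four-term expression for $\pi^+_{\xi_n}\sigma_0$ displayed in (3.65).

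Finally I would perform the $\xi_n$-integration by the residue theorem at $\xi_n=i$ and the spherical integration over $|\xi'|=1$, invoking the parity identity $\int_{|\xi'|=1}\xi_{i_1}\cdots\xi_{i_{2d+1}}\sigma(\xi')=0$ to eliminate the mixed cross terms indexed by $(j<n,\,l=n)$ and $(j=n,\,l<n)$. Two contributions survive: a tangential one proportional to $\sum_{j<n}V_jW_j(x_0)=g(V^T,W^T)$ and a normal one proportional to $V_nW_n$. Combining these with the Clifford trace $\mathrm{Tr}^{S(TM)}(\mathrm{Id})=2^{[n/2]}=2$ produces exactly the two constants $\tfrac{i\pi}{4}$ and $-\tfrac{i\pi}{2}$ in the stated theorem. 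The main obstacle is entirely bookkeeping: keeping the index-range splitting consistent through the Cauchy and residue manipulations and verifying that the parity argument disposes of the off-diagonal terms; no deep analytical difficulty appears, and in particular the auxiliary results on $\sigma_{-3}(H_X^{-1})$ and on the boundary metric (Lemma~3.6) that were crucial in the four-dimensional case are not needed here.
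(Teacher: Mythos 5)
Your proposal is correct and follows essentially the same route as the paper: reduce to the single boundary case $r=0,\ \ell=-2,\ j=k=|\alpha|=0$, identify $\sigma_{0}$ of the composite with $\sigma_{2}\bigl(\widetilde{\nabla}_{V}\widetilde{\nabla}_{W}\bigr)\sigma_{-2}(H_{X}^{-1})$, apply $\pi^{+}_{\xi_n}$ and the parity identity on $S^{1}$, and evaluate by residues at $\xi_n=i$ with $\mathrm{Tr}^{S(TM)}(\mathrm{Id})=2$. Your observation that neither $\sigma_{-3}(H_{X}^{-1})$ nor Lemma~3.6 enters is accurate (the $h'(0)$ factors appearing in the paper's intermediate display (3.66) are vestigial from the four-dimensional computation and do not survive to the stated result).
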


\section*{ Acknowledgements}
The first author was supported by NSFC. 11501414. The second author was supported by NSFC. 11771070.
 The authors also thank the referee for his (or her) careful reading and helpful comments.

\end{document}